\documentclass[10pt]{amsart}

\usepackage[latin1]{inputenc}
\usepackage[T1]{fontenc}
\usepackage{amsmath}
\usepackage[initials,backrefs]{amsrefs}
\usepackage{amsthm}
\usepackage{latexsym}
\usepackage{amssymb}
\usepackage{mathtools}

\mathtoolsset{showonlyrefs} 
\usepackage[all]{xy}
\usepackage{calc}
\usepackage{multicol}
\usepackage{color}
\usepackage[left=2.61cm,right=2.61cm,top=2.72cm,bottom=2.72cm]{geometry}
\usepackage{ulem}
\usepackage{hyperref}
\newtheorem{thm}{Theorem}[section]
\newtheorem{prop}[thm]{Proposition}

\newtheorem{lem}[thm]{Lemma}

\newtheorem{rmq}[thm]{Remark}

\newcommand{\R}{\mathbb{R}}
\numberwithin{equation}{section}
\newcommand{\N}{\mathbb{N}}

\newcommand{\Bcal}{\mathcal{B}}

\newcounter{exercice}

\begin{document}

\title[Singular radial solutions for LNT]{Singular radial solutions for the Lin-Ni-Takagi equation }

\thanks{ J. F\" oldes is partly supported by the National Sicence Foundation under the grant NSF-DMS-1816408. 
\\
$^*$ corresponding author}

\author[Casteras, and F\" oldes]{ Jean-Baptiste Casteras \and Juraj F\" oldes$^*$}

\address{ Jean-Baptiste Casteras
\newline \indent Dept. of Mathematics and Statistics, University of Helsinki
\newline \indent P.O. Box 68 (Pietari Kalmin katu 5), FI-00014, Finland,\indent}
\email{jeanbaptiste.casteras@gmail.com}

\address{Juraj F\" oldes
\newline \indent Dept. of Mathematics, University of Virginia \indent 
\newline \indent  322 Kerchof Hall, Charlottesville, VA 22904-4137,\indent }
\email{foldes@virginia.edu}

\begin{abstract}
We study singular radially symmetric solution to the Lin-Ni-Takagi equation for a supercritical power non-linearity in dimension $N\geq 3$. It is shown that for any ball and any $k \geq 0$, there is a singular solution 
that satisfies Neumann boundary condition and oscillates at least $k$ times around the constant 
equilibrium. Moreover, we show that the Morse index of the singular solution is finite 
or infinite if the exponent is respectively larger or smaller than the 
Joseph--Lundgren exponent . \\
\smallskip

\noindent \textbf{Keywords.} Bifurcation branches, radial Lin-Ni-Takagi equation, oscillations, singular solutions, Morse index. 
\end{abstract}

\maketitle

\section{Introduction}
In the present paper we study singular solutions of the problem
\begin{equation}\label{LNT}
\left\{
\begin{aligned}
-\Delta v+v &= v^p & \qquad &\text{in } B_R \setminus\{0\} \,, \\
\quad v &> 0 & &\text{in }B_R \setminus\{0\} \,, \\
\partial_\nu v &= 0 &\quad &\text{on } \partial B_R\,,
\end{aligned}
\right. 
\end{equation}
where $B_R \subset \R^N$, $N \geq 3$  is a ball of radius $R > 0$ centred at the origin. We show that for sufficiently large $p$ (super-critical), \eqref{LNT} possesses
a radial solution with many oscillations. The importance of singular solutions 
stems from the fact that they are asymptotes to the bifurcation branches of
regular solutions. Therefore, we investigate the Morse index of singular solutions, which indicates the oscillatory or non-oscillatory nature of the bifurcation branches. 

The problem \eqref{LNT}  arises as a particular case of the stationary 
 Keller-Segel system which is a reaction-diffusion system modelling chemotaxis 
 -- oriented motion of cells toward higher or lower 
 concentrations of chemicals. One can also derive \eqref{LNT} from the 
activator-inhibitor system proposed by Gierer and Meinhardt \cite{Gierer-Meinardt}
under the assumption
that one chemical diffuses much faster than the other one. Gierer-Meinhardt system was extensively studied during last decades (see for example \cite{MR2497911} and references therein), since it is one 
of the first and simplest examples of the diffusion driven instability.

Equation \eqref{LNT} with sub-critical exponent on smooth domains has been intensively studied in the last decades. More precisely, consider
\begin{equation}\label{LNTgen}
\left\{
\begin{aligned}
-\Delta v+\tilde{\lambda} v &= v^p & \qquad &\text{in } \Omega \,, \\
\quad v &> 0 & &\text{in }\Omega \,, \\
\partial_\nu v &= 0 &\quad &\text{on } \partial \Omega\,,
\end{aligned}
\right. 
\end{equation}
where $\Omega\subset \R^N$, $N\geq 3$, is an open smooth domain, $\tilde{\lambda} >0$ and $1\leq p\leq 2^\ast -1=\frac{N+2}{N-2}$. 
In a series of seminal works \cites{LNT,NT1,NT2}, Lin, Ni, and Takagi proved the existence of families of solutions concentrating around one or more points. 
Specifically, they showed that, when  $1<p<2^\ast -1$, then the least energy solution $u_{\tilde{\lambda}}$ to \eqref{LNTgen} satisfies
$$u_{\tilde{\lambda}} \to \tilde{\lambda}^{\frac{1}{p-1}} U (\sqrt{\tilde{\lambda}} (x-x_{\tilde{\lambda}})), 
\qquad \textrm{as } \tilde{\lambda} \to \infty
$$
where $x_{\tilde{\lambda}} \in \partial\Omega$ converges to the boundary point with the maximal mean curvature and the asymptotic profile $U$ is the unique positive radial solution to
$$-\Delta U+U=U^p,\ \lim_{|x|\rightarrow \infty}U(y)=0, \text{ in } \R^N.$$

Observe that the parameter $\tilde{\lambda}$ is related to the size of the domain. Indeed, let $v$ we be for example a solution of
 \eqref{LNTgen} with $\Omega = B_{R}$. Then,  by setting $u(x)=R^{\frac{2}{p-1}} v(Rx)$, we see that $u$ satisfies 
 \begin{equation}\label{lnt-rad}
 -\Delta u +R^2 u=u^p,\textrm{ in}\ B_1.
  \end{equation}
We refer to \cites{dancershusen,MR3606457,MR2812575,MR2395219} and the references therein for construction and analysis  of families of solutions concentrating on multiple points located either in the interior of $\Omega$ and/or at the boundary. 

The position of spikes is more restricted if $p$ is critical, that is, if $p=2^\ast -1$. 
Then, it is possible to show the concentration/bubbling phenomena when $\lambda \rightarrow \infty$ with asymptotic profile being the standard bubble, that is,  
the unique 
(up to scaling and translations) solution to
$$-\Delta U=U^{2^\ast -1},\ \text{in } \R^N.$$
When $p$ is critical and $N=3$ or $N\geq 7$, it has been proved that there is no solution bubbling at an interior point of the domain \cites{MR2334996,MR1707889}. Moreover, in arbitrarily dimension, interior bubbling solutions can only exist if they are bubbling also at the boundary \cite{MR1968337}.
We refer to \cite{MR2114411} for construction of families of bubbling solutions when $p$ approaches from below or above the critical one. We also very briefly point out that families of solutions concentrating on higher dimensional object have been obtained see for instance \cites{MR3801293,MR3262455} and the references therein.

In the supercritical case  $p>2^\ast -1$, there is another significant exponent found by Joseph and Lundgren \cite{josephlundgren}
$$p_{JL}=\begin{cases} 1+\frac{4}{N-4-2\sqrt{N-1}},& N\geq 11,\\ \infty,& 3\leq N\leq 10. \end{cases} \,, $$
which is connected to the number of intersections of radial solutions, and therefore to the stability with respect to compactly supported
perturbations.  
First bifurcation results in supercritical range were obtained for radial solutions solving
 \eqref{lnt-rad} with Dirichlet boundary conditions, or 
more generally for 
\begin{equation}
\label{eqjl}
\begin{cases}U_{rr}+\dfrac{N-1}{r}U_r + \lambda g(U)=0,\ 0<r<1,\\
U>0,\ 0<r<1,\\
U'(0) = U(1)=0,\ \end{cases}
\end{equation}
see \cites{buddnorbury,dolbeaultflores,guowei,josephlundgren,merlepeletier,miya2}. If $g(U)=e^U$ or $g(U)=(1+U)^p$,  
Joseph and Lundgren in \cite{josephlundgren} (see also 
\cite[Chapter 2]{MR2779463} for a recent survey) showed that there is a curve of positive solutions to \eqref{eqjl} starting from the trivial solution $U=0$ and $\lambda =0$. 
 Moreover, they proved that this branch oscillates around a fixed value of $\lambda$ when $3\leq N\leq 9$ in case $g(U)=e^U$ respectively $p_S<p<p_{JL}$ when $g(U)=(1+U)^p$
 (see Theorem \ref{bifmiya} below for more precise statement). On the other hand, the branch does not oscillate when $N\geq 10$ respectively $p\geq p_{JL}$.
Let us point out that Gel'fand \cite{MR0153960} was the first one who treated the case $N=3$ for $g(U)=e^U$.

In the literature, there are fewer results for \eqref{lnt-rad} with Neumann conditions. The first reason might be that there are infinitely many branches 
with positive radial solutions, which are harder to analyze. The second reason might be practical, as in the Dirichlet case the radial solutions are 
stable at least in some parameter ranges, whereas in Neumann case, the radial solutions have large Morse index in the space of all (even non-radial) functions. 
Nevertheless, the bifurcation results were obtained by 
Miyamoto \cite{miya1}, who analyzed bifurcations of radial solution to \eqref{LNTgen} with respect to the parameter $\tilde{\lambda}$. 
To keep make the notation compatible with  \cite{miya1}, note that after scaling $v$ we can 
rewrite \eqref{LNTgen} for $\Omega = B_R$ as
\begin{equation}\label{LNTmiya}
\left\{
\begin{aligned}
-\Delta v &=\lambda (v^p -v) & \qquad &\text{in } B_R \,, \\
\quad v &> 0 & &\text{in } B_R \,, \\
\partial_\nu v &= 0 &\quad &\text{on } \partial B_R \,.
\end{aligned}
\right. 
\end{equation}

\begin{thm}[\cite{miya1}]
\label{bifmiya}
Suppose that $p>2^\ast -1$. Let $\mathcal{S}$ denote the set of the regular, radial solutions of \eqref{LNTmiya}. Then
$$\mathcal{S}=\mathcal{C}_0 \cup \bigcup_{n=1}^\infty \mathcal{C}_n \,,$$
where $\mathcal{C}_0 = \{\lambda, 1\}$. Moreover, since solutions are radial and $v'(0) = 0$, 
each $\mathcal{C}_n$ can be parametrized by $\gamma = v(0) \in (0, \infty)$,  
hence $\mathcal{C}_n = \{(\lambda_n (\gamma), v( \cdot ,\gamma, \lambda_n (\gamma)))\}$. Furthermore, $\gamma \mapsto \lambda_n (\gamma ) \in C^1 (0,\infty)$ and the following assertions hold :
\begin{itemize}
\item[(i)] For each $n\geq 1$, $\lambda_n (1)= \bar{\lambda}_n$, where $\bar{\lambda}_n=\frac{\mu_n}{p-1}$ and $\mu_n$ is the $n$-th eigenvalue of the Laplacian with Neumann boundary condition.
\item[(ii)] For each $n\geq 1$, there exists $\lambda_n^\ast>0$ such that $\lambda_n (\gamma )\rightarrow \lambda_n^\ast$ as $\gamma \rightarrow \infty$.
\item[(iii)] If $p<p_{JL} $, then for each $n\geq 1$, $\lambda_n (\gamma )$ oscillates around $\lambda_n^\ast$ infinitely many times as $\gamma \rightarrow \infty$.
More precisely, there exists a sequence $(\gamma_m^n)_m$ with $\gamma_m^n \to \infty$ as $m \to \infty$ such that $\lambda_n (\gamma_m^n) = \lambda^\ast_n$.
\item[(iv)] For each $n\geq 1$, $\lambda_n (\gamma )\rightarrow \infty$ as $\gamma \rightarrow 0^+$.
\item[(v)] For each $\gamma \in (0, \infty)$, $\lambda_1 (\gamma ) < \lambda_2 (\gamma )< \ldots$.
\end{itemize}
\end{thm}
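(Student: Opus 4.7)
The plan is a shooting argument. After the substitution $v(r) = w(\sqrt{\lambda}\, r)$, the radial form of \eqref{LNTmiya} reduces to the parameter-free ODE
\[
w'' + \frac{N-1}{s} w' + w^p - w = 0,\qquad w(0) = \gamma,\ w'(0) = 0,
\]
and the Neumann boundary condition becomes $w'(\sqrt{\lambda}\, R)=0$. Letting $0 < t_1(\gamma) < t_2(\gamma) < \cdots$ denote the successive positive critical points of $w(\cdot;\gamma)$, I set $\lambda_n(\gamma) := (t_n(\gamma)/R)^2$, which makes (v) immediate. The existence of infinitely many such critical points, the strict positivity of $w$ in between, and the $C^1$ dependence $\gamma \mapsto t_n(\gamma)$ are established via the Lyapunov functional $E(s) := \tfrac{1}{2}(w')^2 + \tfrac{1}{p+1}w^{p+1} - \tfrac{1}{2}w^2$, which satisfies $E'(s) = -\tfrac{N-1}{s}(w')^2 \le 0$, combined with the oscillation of the linearization around $w \equiv 1$, namely $u'' + \tfrac{N-1}{s}u' + (p-1)u = 0$, whose solutions oscillate with asymptotic period $\pi/\sqrt{p-1}$ and algebraic $s^{-(N-1)/2}$ decay.

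For (i), expand $w(s;\gamma) = 1 + (\gamma-1)\phi(s) + O((\gamma-1)^2)$ near $\gamma = 1$, where $\phi$ satisfies $\phi'' + \tfrac{N-1}{s}\phi' + (p-1)\phi = 0$, $\phi(0) = 1$, $\phi'(0)=0$. After the rescaling $\sigma = \sqrt{p-1}\, s$ this is exactly the radial Neumann eigenfunction equation for $-\Delta$, so its $n$-th zero of $\phi'$ is at $s = R\sqrt{\mu_n/(p-1)}$, and continuous dependence of $t_n$ on $\gamma$ gives $\lambda_n(1) = \mu_n/(p-1)$. For (iv), as $\gamma \to 0^+$ the solution is approximately $\gamma$ times the regular solution of the linearization $u'' + \tfrac{N-1}{s}u' - u = 0$ around $w = 0$, which equals $c_N\, s^{-(N-2)/2} I_{(N-2)/2}(s) \sim e^s / s^{(N-1)/2}$. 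Therefore $w$ stays of size $O(\gamma e^s)$ on a neighbourhood of $0$ and only enters the nonlinear regime around $w = 1$ when $s \approx \log(1/\gamma)$, so $t_1(\gamma) \to \infty$ and hence every $\lambda_n(\gamma) \to \infty$.

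For (ii) and (iii) I would compare $w(\cdot;\gamma)$ with the singular solution $w^\ast$ of the ODE, characterized by $w^\ast(s) \sim L s^{-2/(p-1)}$ as $s \to 0^+$, where $L^{p-1} = \tfrac{2}{p-1}(N-2-\tfrac{2}{p-1})$. The rescaling $W(\tau) := \gamma^{-1} w(\gamma^{-(p-1)/2}\tau;\gamma)$ gives $W'' + \tfrac{N-1}{\tau}W' + W^p - \gamma^{1-p}W = 0$, $W(0)=1$, $W'(0)=0$, which converges locally uniformly on $[0,\infty)$ as $\gamma \to \infty$ to the regular radial solution of $-\Delta W = W^p$ with $W(0)=1$ (decaying like $L\tau^{-2/(p-1)}$ at infinity). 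Unwinding the rescaling and matching with the pure-power singular profile yields $w(\cdot;\gamma) \to w^\ast$ uniformly on compacts of $(0,\infty)$, so that $t_n(\gamma) \to t_n^\ast$ where $(t_n^\ast)$ are the critical points of $w^\ast$, establishing (ii) with $\lambda_n^\ast = (t_n^\ast/R)^2$. For (iii), the Emden--Fowler substitution $w(s) = s^{-2/(p-1)}\psi(\log s)$ transforms the leading-order (pure-power) part of the equation into the autonomous system $\psi'' + a\psi' + b\psi + \psi^p = 0$ with $a = N-2 - \tfrac{4}{p-1}$, $b = -\tfrac{2}{p-1}(N-2-\tfrac{2}{p-1})$. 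Linearizing around the equilibrium $\psi \equiv L$ (which corresponds to $w^\ast$), the characteristic roots $\tfrac{1}{2}(-a \pm \sqrt{a^2 + 4(p-1)b})$ are complex precisely when $p < p_{JL}$; the associated spiralling of trajectories around $L$ produces infinitely many sign changes of $w(\cdot;\gamma) - w^\ast$ as $\gamma \to \infty$, which by continuous dependence of critical points transfer to infinitely many sign changes of $t_n(\gamma) - t_n^\ast$, and hence of $\lambda_n(\gamma) - \lambda_n^\ast$.

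The main obstacle is (iii): one must promote the qualitative complex-spectrum information at the singular solution into a quantitative infinite-oscillation statement for each fixed $n$, i.e., verify that the sign changes of $w(\cdot;\gamma) - w^\ast$ actually survive at the $n$-th critical point and do not cancel asymptotically, which requires careful tracking of how the perturbation of the singular profile propagates through the damped oscillations around $w \equiv 1$ for large $s$. A secondary technical point is the uniform positivity of $w(\cdot;\gamma)$ at all its critical points, uniformly in $\gamma > 0$, which combines the monotone decay of $E$ with a comparison argument near $w = 0$ using sub- and supersolutions.
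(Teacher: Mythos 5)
This theorem is quoted as background and attributed entirely to Miyamoto \cite{miya1}; the paper itself does not reproduce a proof, so there is no in-paper argument to compare against. Taken on its own terms, your shooting reformulation --- scaling out $\lambda$, letting $t_n(\gamma)$ be the $n$-th positive critical point of the parameter-free profile $w(\cdot;\gamma)$, and setting $\lambda_n(\gamma)=(t_n(\gamma)/R)^2$ --- is precisely the framework used in \cite{miya1}, and your treatments of (i), (iv), (v), and the blow-up argument for (ii) are the standard ones and essentially sound (modulo the point, which you elide, that the locally uniform convergence $W\to W_\infty$ must be upgraded to a uniform-at-infinity estimate before you can match $w(s;\gamma)$ to $L s^{-2/(p-1)}$ at fixed $s>0$, since you are evaluating $W$ at the argument $\gamma^{(p-1)/2}s\to\infty$).

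The gap is where you say it is, in (iii), but I would describe it as a wrong mechanism rather than a missing estimate. Sign changes of $w(\cdot;\gamma)-w^\ast$ in the Emden--Fowler window near the origin do not transfer "by continuous dependence of critical points'' to sign changes of $t_n(\gamma)-t_n^\ast$: the spiralling occurs at scale $s=O(\gamma^{-(p-1)/2})$, whereas $t_n$ sits at a fixed $O(1)$ location, and the intervening damped oscillations around $w\equiv 1$ can, a priori, absorb the perturbation without $t_n$ ever crossing $t_n^\ast$. The mechanism that actually closes this step, both in \cite{miya1} and in the Dirichlet setting of Joseph--Lundgren, is Sturmian intersection/Morse-index counting: one shows that the number of intersections of $w(\cdot;\gamma)$ with $w^\ast$ on $(0,s_0]$ diverges as $\gamma\to\infty$ when $p<p_{JL}$ (a consequence of the complex characteristic roots you computed), relates this to the radial Morse index of the corresponding solution on $\mathcal C_n$, and invokes the fact that the Morse index must jump by at least one at each turning point of the branch. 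Unbounded Morse index along $\mathcal{C}_n$ then forces infinitely many turning points and hence infinitely many values of $\gamma$ with $\lambda_n(\gamma)=\lambda_n^\ast$. Your outline would need to replace the continuous-dependence heuristic with this index-counting argument, not merely tighten it.
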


As mentioned above, the parameter $\lambda$ is connected to the size of the domain, which is in many models fixed (see Keller-Segel system and chemotaxis). However, 
other constants such as diffusivity can change, and these, after scaling, are related to $p$. Hence, 
instead of changing the domain,  that is, the parameter $\lambda$, we fix the domain and vary $p$.
We recall bifurcation results in such case from \cite{BonheureGrumiauTroestler2015}. Here and below 
$p_i^{rad}$ denotes the $i$-th radial eigenvalue of the operator $-\Delta+Id$ in
the ball $B_R := \{x \in \mathbb{R}^N : |x| < R\}$ with Neumann boundary conditions.

\begin{thm}[\cite{BonheureGrumiauTroestler2015}]\label{thm:bifurcation}
For every $i\geq 2$, the trivial branch $(p, 1)$ of problem \eqref{LNT} 
has a  bifurcation point at $(p_i^{rad},1)$. 
If $\Bcal_i \subset \R^2$, parametrized by $(p, u(0))$, is the continuum that branches out of $(p_i^{rad},1)$, then the following holds:
\begin{itemize}
\item[(i)] The branches $\Bcal_i$ are unbounded and do not intersect. Furthermore, near $(p_i^{rad},1)$, $\Bcal_i$ is a $C^1$-curve.
\item[(ii)] If $(p, A) \in \Bcal_i$, then the corresponding solution $u_p$ satisfies $u_p > 0$ in $B_R$.
\item[(iii)] Each branch consists of two connected components $\Bcal_i^- := \Bcal_i \cap \{(p, A) : A < 1\}$ and 
 $\Bcal_i^+ := \Bcal_i \cap \{(p, A) : A > 1\}$.
\item[(iv)] If $(p, A) \in \Bcal_i$ then the corresponding $u_p-1$ has exactly $i-1$ zeros, $u_p'$ has exactly $i$ zeros (including ones on the boundary and at the origin).
\item[(v)] The functions satisfying $u_p (0) < 1$ are uniformly bounded in the $C^1$-norm.
\end{itemize}
\end{thm}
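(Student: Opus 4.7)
The plan is to combine a local Crandall--Rabinowitz bifurcation analysis at each point $(p_i^{rad}, 1)$ with Rabinowitz's global continuation, and then use ODE/Sturm--type arguments to propagate nodal and positivity information along the resulting branches.

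First, I would linearize the map $F(p,v) = -\Delta v + v - v^p$ at $v \equiv 1$: the Fr\'echet derivative in $v$ is $L_p \phi = -\Delta \phi + (1-p)\phi$ with Neumann boundary condition. Restricting to radial functions, the radial Neumann eigenvalues of $-\Delta + \mathrm{Id}$ are simple, so $(p_i^{rad}, 1)$ is a simple bifurcation point provided the transversality condition $\partial_{pv} F(p_i^{rad},1)[\phi_i] \notin \mathrm{Range}(L_{p_i^{rad}})$ holds; a direct computation verifies this. Crandall--Rabinowitz then produces a local $C^1$ curve $\mathcal{B}_i$ in $\R \times C^{2,\alpha}_{rad}(\overline{B_R})$, settling the $C^1$ assertion of (i). The first-order expansion $u = 1 + s \phi_i + O(s^2)$ splits $\mathcal{B}_i$ locally according to $\mathrm{sgn}(s)$ into a piece with $A = u(0) > 1$ and one with $A < 1$, which is the local version of (iii).

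Next I would invoke Rabinowitz's global bifurcation theorem to extend $\mathcal{B}_i$ to a maximal continuum which is either unbounded or returns to the trivial branch at another $(p_j^{rad}, 1)$. To rule out the second alternative and establish (iv), I track the nodal counts using the radial ODE $u'' + \tfrac{N-1}{r} u' - u + u^p = 0$ with $u'(0) = u'(R) = 0$: near bifurcation, $u - 1$ has exactly $i - 1$ interior zeros and $u'$ has exactly $i$ zeros, inherited from $\phi_i$. A Sturm--type continuity argument shows these counts can only change by colliding two simple zeros or by a zero escaping across $r = 0$ or $r = R$. Collision is excluded because a double zero of $u-1$ would, via the linearized Cauchy problem, force $u \equiv 1$; escape is excluded by the Neumann condition at $r = R$ and the boundedness of coefficients at $r = 0$. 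Hence the counts are preserved along $\mathcal{B}_i$, distinct branches cannot intersect, the return alternative is ruled out, and each $\mathcal{B}_i$ is unbounded, completing (i) and (iv).

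For (ii), positivity propagates along $\mathcal{B}_i$: starting from $u \equiv 1 > 0$, a first loss of positivity produces $u(r_0) = 0$ with $u'(r_0) = 0$ by minimality, which via uniqueness for the radial Cauchy problem forces $u \equiv 0$, a contradiction. For (v), on $\mathcal{B}_i^-$ the condition $u(0) < 1$ combined with $v \equiv 1$ as an upper barrier yields $0 < u < 1$ throughout $B_R$ (by a comparison argument using monotonicity of $v \mapsto v^p - v$ for $v$ near $1$); standard elliptic regularity then upgrades this $L^\infty$ bound to a $C^1$ bound uniform in $p$. The main obstacle is the nodal bookkeeping in the global continuation: one must rigorously exclude zero creation/annihilation in the interior as well as zeros being absorbed at $r = 0$ or pushed through $r = R$, and simultaneously prevent the branch from degenerating to a singular solution at finite $p$. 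This requires uniform interior estimates together with a careful boundary analysis, and it is the heart of why the integer invariant in (iv) globally characterizes $\mathcal{B}_i$.
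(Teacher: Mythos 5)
The paper does not prove Theorem~\ref{thm:bifurcation}; it is quoted verbatim from \cite{BonheureGrumiauTroestler2015}, so there is no in-paper proof to compare against. That said, your outline (Crandall--Rabinowitz at $(p_i^{rad},1)$, Rabinowitz global continuation, and Sturm-type propagation of nodal counts to rule out the ``return to the trivial branch'' alternative and to keep branches disjoint) is indeed the standard architecture for results of this type and is essentially the route taken in the cited reference. Your treatment of (i)--(iv) is therefore sound at the level of a sketch, including the transversality check (since $\partial_p L_p = -\mathrm{Id}$ and $L_{p_i^{rad}}$ is self-adjoint, $-\phi_i\notin\mathrm{Range}(L_{p_i^{rad}})$ automatically).

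However, your argument for (v) has a genuine error. You claim that $u(0)<1$ together with $v\equiv 1$ as an upper barrier yields $0<u<1$ on all of $B_R$. This cannot hold: by part (iv) the profile $u-1$ has exactly $i-1\geq 1$ zeros, so every solution on $\Bcal_i^-$ necessarily crosses the value $1$ in the interior; $u$ oscillates around $1$, and there is no one-sided comparison. The correct mechanism for the uniform $C^1$ bound is the monotonicity of the radial energy
\begin{equation}
E(r)=\tfrac12 (u'(r))^2-\tfrac12 u(r)^2+\tfrac{u(r)^{p+1}}{p+1},
\end{equation}
which is nonincreasing in $r$ (the same functional is used in the proofs of Propositions~\ref{LNTpinfty} and~\ref{LNTcontmp} of this paper). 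Since $u'(0)=0$ and $u(0)<1$, one has $E(0)=g(u(0))<0$ where $g(s)=-\tfrac12 s^2+\tfrac{s^{p+1}}{p+1}$. Then $E(r)\leq E(0)<0$ for all $r$, which forces $g(u(r))<0$ and hence $u(r)<s_p^*=\left(\tfrac{p+1}{2}\right)^{1/(p-1)}$, a quantity uniformly bounded in $p$, while $|u'(r)|^2\leq 2\bigl(E(0)-\min_s g(s)\bigr)\leq 1$. This gives the $L^\infty$ and $C^1$ bounds directly, with no need for elliptic regularity bootstrapping, and is the argument you should substitute for the barrier step.

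A smaller remark: the concern you raise at the end about the branch ``degenerating to a singular solution at finite $p$'' is not an obstruction to the statement being proved. The Rabinowitz continuum consists only of regular solutions by construction; a singular limit would lie on its closure but not on the continuum itself, and would in fact be consistent with unboundedness in the $(p,u(0))$ parameterization. So you need not, and should not, try to exclude it.
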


Previously, by different techniques the lower branches $\Bcal_i$ were presumably constructed in \cite{BonheureGrossiNorisTerracini2015} and the first upper branch $\Bcal_2$ by \cite{reywei} when $N=3$.

The goal of this paper is to establish oscillatory results for upper branches as in Theorem \eqref{bifmiya} or as in the Dirichlet case. 
In the following, we will only be concerned with upper branches for $p>2^\ast -1$ and their asymptotics when $p$ gets large. 
Of course, in the finite range the branches can have only finitely many turns, and therefore large $p$ behaviour determines on 
oscillations or non-oscillation of branches. We focus on  singular solutions, that are limit profiles of bifurcation branches as proved by Miyamoto in the following theorem.

\begin{thm}[ \cite{miya1}]
\label{LNTthmmiya}
Let $N\geq 3$ and $p>2^\ast -1$. There is a unique solution $U^\ast_p:=U^\ast$ to 
\begin{equation}
\label{LNTwhole}
\begin{cases}
- u^{\prime \prime} - \dfrac{N-1}{r}u^\prime + u = u^{p},& \text{in}\ \R^+,\\
\lim_{r\to 0^+} r^\theta u(r)=A_{p,N},\\
u>0,& \text{in} \ \R^+,
\end{cases}
\end{equation}
where
$$\theta = \frac{2}{p - 1},\ and\  A_{p, N} = [\theta(N - 2 - \theta)]^{\frac{1}{p-1}}.$$
Moreover, $U^\ast$ attains infinitely many times the value $1$. Furthermore, if there are sequences $(\gamma_n)_n$ and $(p_n)_n$ with $\gamma_n \rightarrow \infty$ and $p_n \rightarrow p_\infty >2^\ast -1$, then $u_{\gamma_n, p_n}\rightarrow U^\ast_{p_\infty} $ in $C_{loc}^0 (0,\infty)$, where $u_{\gamma ,p}$ is the solution to
\begin{equation}
\label{LNTeqintfin}
\begin{cases}
- u^{\prime \prime}-\dfrac{N-1}{r}u^\prime + u = u^{p},& \text{in} \ \R,\\
u(0)=\gamma,\ u^\prime (0)=0.
\end{cases}
\end{equation}
\end{thm}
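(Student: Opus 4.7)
The plan is to first construct $U^\ast$ on a punctured neighbourhood of the origin by a stable-manifold style argument, extend it globally by ODE theory and an energy bound, prove it oscillates around the constant $u\equiv 1$ via comparison with a Bessel-type linearization, and recover it as the $C^0_{\mathrm{loc}}$-limit of the $u_{\gamma_n, p_n}$ through an a priori bound combined with the uniqueness of the first step.

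\textbf{Local construction and global extension.} The first move is the Emden-Fowler transformation $u(r) = r^{-\theta} w(s)$ with $s = \log r$; a direct computation rewrites \eqref{LNTwhole} as the autonomously-perturbed ODE
\begin{equation*}
w'' + (N - 2 - 2\theta)w' - \theta(N - 2 - \theta)w + w^p - e^{2s}w = 0 \quad \text{on }\R,
\end{equation*}
whose autonomous limit ($s\to-\infty$) admits $w \equiv A_{p, N}$ as equilibrium, by the defining relation $A_{p,N}^{p-1} = \theta(N-2-\theta)$. Linearizing at $A_{p,N}$ gives the characteristic polynomial $\lambda^{2} + (N-2-2\theta)\lambda + (p-1)\theta(N-2-\theta)= 0$, whose roots have strictly negative real parts since $p > 2^\ast-1$ forces $N-2-2\theta > 0$. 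Setting $w = A_{p, N} + z$ and using variation of parameters with the Green's function that selects the unique particular solution tending to $0$ at $-\infty$ reduces the problem to a fixed-point equation $z = \mathcal{T}(z)$ which is a contraction on the exponentially weighted space $\{z \in C^1((-\infty, s_0]) : \sup_{s \leq s_0} e^{-2s}(|z(s)|+|z'(s)|) < \infty\}$ for $s_0$ sufficiently negative. This yields both existence and uniqueness of $U^\ast$ with the prescribed singular profile. Global extension on $(0,\infty)$ is then standard, and the monotone Pohozaev-type energy
\begin{equation*}
E(r) = \tfrac{1}{2}(u')^2 + \tfrac{u^{p+1}}{p+1} - \tfrac{u^2}{2},\qquad E'(r) = -\tfrac{N-1}{r}(u')^2 \leq 0,
\end{equation*}
together with the coercivity of the potential as $u\to +\infty$, rules out blow-up and forces $U^\ast>0$ on $\R^+$.

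\textbf{Oscillations around $1$ and convergence of regular solutions.} Linearizing the equation at the constant $u\equiv 1$ yields the Bessel-type ODE $v'' + \tfrac{N-1}{r} v' + (p-1)v = 0$, whose solutions oscillate infinitely often. The monotonicity of $E(r)$ combined with $\int_1^\infty (u')^2 r^{-1}\,dr < \infty$ prevents $U^\ast - 1$ from being eventually monotone, and a Sturm-type comparison then forces infinitely many zeros. For the convergence statement, consider $(\gamma_n, p_n) \to (\infty, p_\infty)$ and set $v_n(y) = \gamma_n^{-1} u_{\gamma_n, p_n}\bigl(\gamma_n^{-(p_n-1)/2} y\bigr)$; this rescaled profile converges on compact sets to the positive solution of $-\Delta V = V^{p_\infty}$ on $\R^N$ with $V(0) = 1$, which controls $u_{\gamma_n, p_n}$ in a shrinking inner region. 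Together with the energy identity on each annulus $[r_0, R]\subset (0, \infty)$ one obtains a uniform $L^\infty_{\mathrm{loc}}((0,\infty))$-bound, so Arzelà-Ascoli extracts a subsequence $u_{\gamma_n, p_n}\to U$ in $C^0_{\mathrm{loc}}(0, \infty)$. The limit $U$ solves the ODE on $\R^+$; matching the inner rescaled profile with the outer limit produces $r^\theta U(r) \to A_{p_\infty, N}$ as $r\to 0^+$, so the uniqueness of the first step identifies $U = U^\ast_{p_\infty}$ and the full sequence converges.

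\textbf{Main obstacle.} The most delicate piece is the uniform control on compact subsets of $(0, \infty)$ when $\gamma_n \to \infty$: since $u_{\gamma_n, p_n}(0)\to\infty$, a careful blow-up analysis is needed to exclude secondary concentration away from the origin and to glue the inner bubble (at scale $\gamma_n^{-(p_n-1)/2}$) to the outer singular profile $A_{p_\infty, N}\, r^{-\theta}$; everything else in the argument is a combination of fairly standard ODE/invariant-manifold techniques.
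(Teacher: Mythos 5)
The paper does not reprove this result; it is cited from Miyamoto \cite{miya1}, and the analytical machinery the paper borrows for its own proofs (the change of variables \eqref{LNTdefy}, the equation \eqref{LNTeqy}, and the integral representation \eqref{repform}) is exactly the Emden--Fowler transformation you use, up to the rescaling $\zeta = -m^{-1}\ln r$ versus your $s=\ln r$; the Green's-function contraction you set up is the integral form of \eqref{repform}. So the construction part of your proposal follows the same route as the cited reference.

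There is, however, a genuine gap in the convergence step, and it is precisely the one you flag as the ``main obstacle'' without resolving it. The inner rescaling $v_n(y) = \gamma_n^{-1} u_{\gamma_n,p_n}\bigl(\gamma_n^{-(p_n-1)/2}\,y\bigr)$ controls $u_{\gamma_n,p_n}$ only on a region of radius $O\bigl(\gamma_n^{-(p_n-1)/2}\bigr)\to 0$, while the $C^0_{\mathrm{loc}}(0,\infty)$ compactness you extract via the energy and Arzel\`a--Ascoli controls $u_{\gamma_n,p_n}$ only on fixed compacts away from the origin. These regimes separate as $n\to\infty$. The profile $V$ of $-\Delta V=V^{p_\infty}$ satisfies $y^\theta V(y)\to A_{p_\infty,N}$ only as $y\to\infty$, and locally uniform convergence $v_n\to V$ on compacts of $[0,\infty)$ gives no uniform estimate at $y\sim \gamma_n^{(p_n-1)/2}\,r$ for fixed $r>0$. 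In particular ``matching the inner rescaled profile with the outer limit'' does not yield $r^\theta U(r)\to A_{p_\infty,N}$: you would need a quantitative bridge across the intermediate region, e.g.\ a uniform bound on the intersection number of $u_{\gamma_n,p_n}$ with the pure singular profile $A_{p,N} r^{-\theta}$, or a monotone Pohozaev-type functional in the Emden--Fowler variable, together with an argument that the limit $U$ cannot be a regular solution $u_\gamma$ with $\gamma<\infty$ nor the constant $1$. None of this is supplied.

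Two smaller issues with the first part. First, the assertion that coercivity of the energy ``forces $U^\ast>0$'' is not justified: at a hypothetical first zero $r_1$ of $U^\ast$ one would have $E(r_1)=\tfrac12\bigl((U^\ast)'(r_1)\bigr)^2\geq 0$, which is perfectly compatible with $E$ decreasing from $E(0^+)=+\infty$, so positivity requires a separate dynamical argument. Second, the step ``monotonicity of $E$ plus $\int_1^\infty (u')^2 r^{-1}\,dr<\infty$ prevents eventual monotonicity of $U^\ast-1$'' does not hold as stated: $U^\ast-1$ could a priori decrease monotonically to $-1$ (i.e.\ $U^\ast\to 0$), and the finiteness of that integral is consistent with that. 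Ruling out convergence to $0$ versus convergence to $1$ is precisely the dichotomy between regular (ground-state-like) behaviour and the singular solution, and it needs a phase-plane argument (or a comparison with the Sturm problem only after one knows $U^\ast\to 1$). The Sturm comparison itself is fine once $U^\ast\to 1$ is established.
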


Since $U^\ast$ attains infinitely many times the value $1$, there exists an increasing sequence
 $(R^i_{p})_i$ such that $(U^\ast_{p})^\prime (R_{p}^i)=0$, and therefore $U^\ast_{p}$ is a solution of \eqref{LNT} with $R$ replaced by $R_{p}^i$. However, if the size of the domain
 is fixed, then the existence of singular solution does not follow from Theorem \ref{LNTthmmiya}, unless one is willing to change the equation (or more precisely $\lambda$) by
 scaling as in \eqref{LNTmiya}. 
 
In our main result, we show that, for a fixed radius $R$ and any large integer $i > 1$, we can find $p>2^\ast -1$ such that $R^i_p
= R$. In other words, for any $R$ fixed, we are able to construct a singular solution to \eqref{LNT} having a prescribed number of intersections with $1$ (and therefore a prescribed number of critical points). Since by Theorem \ref{thm:bifurcation} all solutions on $\mathcal{B}^+_i$ have exactly $i$ critical points, we believe that the limit point of $\mathcal{B}^+_i$ is 
exactly the constructed singular solution with $i$ critical points. 

Our theorem also complements the results proved by Lin and Ni \cite{MR974610} that, asserts that for any fixed $p>2^\ast -1$, there exists $R^\ast$ depending on $p$ and $N$ such that, for all $R< R^\ast$, equation \eqref{LNT} only admits constant solutions.  

\begin{thm}
\label{LNTmainthm}
Let $N\geq 3$ and $R>0$. Fix $\tilde{p}>2^\ast -1$ and let $U_{\tilde{p}}^\ast$ be the solution to \eqref{LNTwhole}. Let $i^\ast$ be the smallest integer such that 
$R_{\tilde{p}}^{i^\ast}>R$. Then, for any $i\geq i^\ast$, there exists $p^i>2^\ast -1$ such that
$$
R^i_{p^i}=R.
$$
In particular, for any $i\geq i^*$, there exists $p^i>2^\ast -1$ such that equation \eqref{LNT}  admits a singular radial solution $U$ satisfying
$$
\sharp \{r\in [0,R]| U(r)=1 \}=i.
$$
\end{thm}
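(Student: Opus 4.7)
The plan is to keep $R$ fixed and treat $R^i_p = R$ as an equation in the parameter $p$. For each $i \geq i^\ast$ I would set
\[
\Phi_i(p) := R^i_p \,, \qquad p \in (2^\ast -1, +\infty) \,,
\]
and solve it via the intermediate value theorem. By Theorem \ref{LNTthmmiya}, the profile $U^\ast_p$ exists for every such $p$ and attains $1$ infinitely often, hence has infinitely many critical points, so $\Phi_i$ is well-defined. Moreover, ODE uniqueness rules out $U^\ast_p(r_\ast) = 1$ at a critical point $r_\ast$ (otherwise $U^\ast_p \equiv 1$); the equation then gives
\[
(U^\ast_p)''(r_\ast) = U^\ast_p(r_\ast) - U^\ast_p(r_\ast)^p \neq 0 \,,
\]
so every critical point of $U^\ast_p$ is non-degenerate.

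To show continuity of $\Phi_i$ at $p_0 \in (2^\ast -1, \infty)$, take $p_n \to p_0$ and combine the two convergence statements of Theorem \ref{LNTthmmiya}: for each fixed $p_n$, $u_{\gamma, p_n} \to U^\ast_{p_n}$ in $C^0_{loc}(0,\infty)$ as $\gamma \to \infty$, and for a joint sequence $(\gamma_n, p_n)$ with $\gamma_n \to \infty$ and $p_n \to p_0$, one has $u_{\gamma_n, p_n} \to U^\ast_{p_0}$ in $C^0_{loc}(0,\infty)$. A diagonal extraction in $\gamma_n$ yields $U^\ast_{p_n} \to U^\ast_{p_0}$ in $C^0_{loc}(0,\infty)$; bootstrapping via the ODE upgrades the convergence to $C^1_{loc}$; the non-degeneracy of critical points together with the implicit function theorem give $R^i_{p_n} \to R^i_{p_0}$.

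It remains to evaluate the endpoints of $\Phi_i$. On one side, the hypothesis $R^{i^\ast}_{\tilde{p}} > R$ and the monotonicity $R^1_p < R^2_p < \cdots$ give $\Phi_i(\tilde{p}) > R$ for every $i \geq i^\ast$. On the other, I would prove $\Phi_i(p) \to 0$ as $p \to \infty$. Near the singularity, $U^\ast_p(r) = A_{p,N} r^{-\theta}(1 + o(1))$ with $\theta = 2/(p-1)$, and since $\theta(p-1) = 2$ we have $A_{p,N}^{1/\theta} = [\theta(N-2-\theta)]^{1/2} \sim \sqrt{2(N-2)/p}$, which locates the first sign change of $U^\ast_p - 1$ at scale $p^{-1/2}$. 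In the outer region where $U^\ast_p$ is close to $1$, the substitution $u = 1 + \phi/p$, $r = s/\sqrt{p-1}$ formally reduces the equation to the Bessel-type ODE
\[
\phi'' + \frac{N-1}{s}\phi' + \phi = 0 \,,
\]
whose derivative has a discrete sequence of simple zeros $s_1 < s_2 < \cdots$. Matching the inner and outer regions, the first $i$ critical points of $U^\ast_p$ lie at radii of order $p^{-1/2}$, hence $\Phi_i(p) < R$ for $p$ large. The IVT applied to $\Phi_i$ on $[\tilde{p}, +\infty)$ produces the desired $p^i$. The intersection count with $1$ then follows: on each of the $i$ intervals $(R^{j-1}_{p^i}, R^j_{p^i})$ (with $R^0_{p^i} := 0$) the function $U^\ast_{p^i}$ is monotone between consecutive extrema, and since $U^\ast_{p^i}$ oscillates around $1$ with extrema alternating on each side of $1$, it crosses the value $1$ exactly once per interval.

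I expect the main obstacle to be the rigorous justification of the matched asymptotics as $p \to \infty$: one must show that the inner rescaling $r = A_{p,N}^{1/\theta}\rho$ and the outer rescaling $r = s/\sqrt{p-1}$ have a common overlap on which both expansions are valid, and that the outer limit carries at least $i$ simple zeros of the derivative inside any prescribed range of $s$. A secondary technicality is the continuous dependence of $U^\ast_p$ on $p$ near $r = 0$, which requires a refined asymptotic expansion of $U^\ast_p$ in order to initialize the ODE at a small positive radius before appealing to standard continuous dependence.
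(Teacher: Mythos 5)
Your scaffolding is the same as the paper's: prove that $p \mapsto R^i_p$ is continuous, prove that $R^i_p \to 0$ as $p \to \infty$, and conclude by the intermediate value theorem. The non-degeneracy observation (at a critical point $r_\ast$, $(U^\ast_p)''(r_\ast) = U^\ast_p(r_\ast) - U^\ast_p(r_\ast)^p \neq 0$ by uniqueness) is also exactly what the paper uses. Your diagonal argument for $U^\ast_{p_n} \to U^\ast_{p_0}$ in $C^0_{loc}(0,\infty)$, built out of the two convergence statements in Theorem \ref{LNTthmmiya}, is in fact a cleaner route to that convergence than the paper's compactness argument and is worth keeping.

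However, there are two genuine gaps. First, in the continuity step, $C^1_{loc}(0,\infty)$ convergence plus non-degeneracy is not by itself enough to conclude $R^i_{p_n}\to R^i_{p_0}$: you must rule out that critical points of $U^\ast_{p_n}$ escape to $r=0$ or appear near $0$, since otherwise the index $i$ shifts under the limit. The paper closes this with the two-sided bound $A_{p,N}r^{-\theta}\le U^\ast_p(r)\le A_{p,N}r^{-\theta}(1+D_p r^2)$ for $r\le \tilde c/\sqrt p$ (Remark \ref{pbou}), which shows $U^\ast_p>1$ on an interval $(0,r_0)$ with $r_0$ uniform for $p$ near $p_0$, so $R^1_p\ge r_0$. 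You flag this as a ``secondary technicality'' but it is an essential input, and without it the IVT argument can misfire.

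The second and more serious gap is $R^i_p\to 0$. Your matched-asymptotics heuristic points in the right direction — the outer limit $\phi'' + \frac{N-1}{s}\phi' + \phi = 0$ is precisely what underlies the paper's Sturm--Picone comparison — but the statement ``in the outer region where $U^\ast_p$ is close to $1$'' presupposes exactly what must be proved: that $|U^\ast_p(r)-1|$ is uniformly small for all $r\ge \tilde c/\sqrt p$, with explicit control in $p$. The paper establishes this by a chain of hard estimates: the Emden--Fowler change of variables, the representation formula \eqref{repform} for $\tilde\eta$, the sign and size bounds $0\ge\tilde\eta\ge -(1+\varepsilon_0)P_N f$ on $[\tilde\zeta_p,\infty)$ (Lemmas \ref{l:boct}--\ref{tildeetapetit}), the derivative bound $|(U^\ast_p)'(\tilde r_p)|\le C p^{-1/2}$, and finally the monotone energy $E(r)$ which propagates the closeness to $1$ from $\tilde r_p$ to all larger $r$. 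None of that machinery appears in your proposal, and it cannot be replaced by an appeal to formal inner/outer matching. Until that input is supplied, the key limit $\Phi_i(p)\to 0$ is unproved, which is the heart of the theorem.

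Two smaller remarks: you don't need the lower branch monotonicity $R^1_p<R^2_p<\cdots$ to conclude $\Phi_i(\tilde p)>R$ for all $i\ge i^\ast$ — it follows directly from the fact that $(R^i_{\tilde p})_i$ is increasing by definition of the sequence — but of course that is automatic. And the final count $\sharp\{r\in[0,R]:U(r)=1\}=i$ requires that $U^\ast_{p^i}-1$ has exactly one sign change between consecutive critical radii; this follows from the fact that, by uniqueness, consecutive extrema of $U^\ast_p$ lie on opposite sides of $1$, which is worth stating explicitly.
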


We remark that an analogous result with $u^p$ replaced by $\lambda e^u$ (with $\lambda$ as a bifurcation parameter) been obtained by the authors and Bonheure in \cite{BCF}. 

Next, we investigate the asymptotic behavior of the branch $\mathcal{B}_i^+$. The following theorem proved in \cite{miya1} gives a strong indication that for each $i \geq 1$, the branch $\mathcal{B}_i^+$ oscillates around $p^i$ (see Theorem \ref{LNTmainthm})
when $2^\ast -1 <p^i < p_{JL}$. Fix $p>2^\ast -1$ and $\gamma_0$. We denote by $(r_{p,\gamma}^i)_i$, the increasing sequence of positive real numbers satisfying $u_{\gamma,p}^\prime (r_{p,\gamma}^i )=0$, where $u_{\gamma, p}$ is the unique solution to \eqref{LNTeqintfin}.

\begin{thm}\cite[Theorem $6.1$]{miya1}\label{th:mia}
Let $R>0$, $N \geq 11$, $i\geq i^\ast$, and $2^\ast -1 <p^i < p_{JL}$. Then, there exist a sequence of initial data $(\gamma_n)_n$ and a sequence of positive integer $(j_n)_n$ such that $\gamma_n \rightarrow \infty$ and $r_{p^i ,\gamma_n}^{j_n}=R$. 
\end{thm}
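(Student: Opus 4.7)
The proof is a shooting argument. Fix $p = p^i$ supplied by Theorem~\ref{LNTmainthm}, so that $(U^*_{p^i})'(R) = 0$. It suffices to find a sequence $\gamma_n \to \infty$ with $u'_{\gamma_n, p^i}(R) = 0$, since $R$ is then automatically the $j_n$-th critical point of $u_{\gamma_n, p^i}$ for some index $j_n$. By Theorem~\ref{LNTthmmiya} together with standard continuous dependence for the ODE, $u_{\gamma, p^i} \to U^*_{p^i}$ in $C^1_{\mathrm{loc}}(0,\infty)$, so $u'_{\gamma, p^i}(R) \to 0$; the task is to upgrade this convergence to an \emph{oscillatory} one and invoke the intermediate value theorem.

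The oscillation is revealed by the Emden--Fowler substitution $r = e^t$, $w(t) = e^{\theta t} u(e^t)$, $\theta = 2/(p^i - 1)$, which recasts the ODE as
\begin{equation*}
w'' + (N-2-2\theta)\, w' - \theta(N-2-\theta)\, w + w^{p^i} = e^{2t}\, w.
\end{equation*}
The singular equilibrium $w \equiv A := A_{p^i, N}$ of the autonomous limit equation (obtained by dropping $e^{2t}w$) has linearisation
\begin{equation*}
\epsilon'' + \alpha\, \epsilon' + (p^i - 1) A^{p^i - 1}\, \epsilon = 0, \qquad \alpha := N - 2 - 2\theta > 0,
\end{equation*}
whose discriminant is negative precisely when $N \geq 11$ and $p^i < p_{JL}$. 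Under the hypotheses of the theorem, the characteristic exponents are therefore $-\alpha/2 \pm i\omega$ with $\omega > 0$, making $A$ a stable focus in $t$.

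For large $\gamma$, matching the plateau $u_{\gamma, p^i}(r) \approx \gamma$ near the origin with the singular tail $U^*_{p^i}(r) \sim A r^{-\theta}$ gives an entry time $t_\gamma \sim -(\log \gamma)/\theta \to -\infty$ beyond which the trajectory $w_\gamma(t) := e^{\theta t} u_{\gamma, p^i}(e^t)$ stays in a fixed neighborhood of $A$. On $[t_\gamma, t_0]$ with $t_0 = \log R$, a Gronwall estimate controls both the nonlinear remainder and the $e^{2t}w$ perturbation, so the discrepancy $\phi_\gamma := w_\gamma - w^*$ (with $w^*$ the transform of $U^*_{p^i}$) is well approximated by solutions of the linearisation around $w^*$; near $t = -\infty$, where $w^* \approx A$, that linearisation is the spiral equation above. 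As $\gamma$ grows, the phase of $\phi_\gamma$ at $t_0$ advances by $\omega(t_0 - t_\gamma) \sim (\omega/\theta) \log \gamma$ and therefore winds around arbitrarily many times. Using the identity $r^{\theta + 1} u'(r) = w'(t) - \theta w(t)$ together with $(w^*)'(t_0) = \theta w^*(t_0)$, this winding translates into infinitely many sign changes of $\gamma \mapsto u'_{\gamma, p^i}(R)$, and IVT yields the desired sequence.

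The principal obstacle is the quantitative phase tracking over the long interval $[t_\gamma, t_0]$ whose length grows like $\log \gamma$. One must verify (i) that the ``entry phase'' of $\phi_\gamma$ at $t_\gamma$ remains bounded as $\gamma \to \infty$, which follows from the self-similar profile of the matching region provided by the convergence in Theorem~\ref{LNTthmmiya}, and (ii) that the linearisation error accumulates strictly more slowly than the principal phase $\omega(t_0 - t_\gamma)$, which is ensured by the stable-focus structure of the linearisation precisely under the assumptions $N \geq 11$ and $p^i < p_{JL}$.
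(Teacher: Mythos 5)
The present paper does not prove this statement at all; it is quoted verbatim from Miyamoto \cite[Theorem 6.1]{miya1}, so there is no in-paper proof to compare your attempt against. I can only assess whether your sketch would plausibly yield a correct proof of Miyamoto's result.

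The mechanism you identify is the standard Joseph--Lundgren strategy and is almost certainly the engine of Miyamoto's argument: pass to Emden--Fowler variables, observe that the equilibrium $A$ of the autonomous limit is a stable focus exactly when $N\ge 11$ and $p<p_{JL}$ (and a node when $p>p_{JL}$), and convert the resulting winding of the discrepancy $\phi_\gamma = w_\gamma - w^*$ into sign changes of $\gamma\mapsto u'_{\gamma,p^i}(R)$, from which the intermediate value theorem produces the sequence $\gamma_n$. Your transformed ODE, the linearized restoring coefficient $(p-1)A^{p-1}=\theta(N-2-\theta)$, the identification of the discriminant threshold with $p_{JL}$, the entry-time scaling $t_\gamma\sim-(\log\gamma)/\theta$, and the identity $r^{\theta+1}u'(r)=\dot w(t)-\theta w(t)$ (with $\dot w^*(t_0)=\theta w^*(t_0)$ encoding $(U^*_{p^i})'(R)=0$) are all correct.

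However, the two points you flag at the end are not peripheral technicalities; they are the content of the proof, and neither is dispatched here. Most seriously, the damping $-\alpha/2$ makes $\phi_\gamma$ decay like $e^{-\alpha(t_0-t_\gamma)/2}\sim\gamma^{-\alpha/(2\theta)}$, so the oscillating signal at $t_0$ is exponentially small in $\log\gamma$. An additive Gronwall bound on the linearization error and on the $e^{2t}w$ perturbation (and on the deviation of $w^*(t)$ from the constant $A$, which is $O(e^{2t})$ and not small near $t_0$) would therefore swamp the signal entirely; one needs a relative error bound of size $o(1)$ holding uniformly on an interval whose length grows like $\log\gamma$, and establishing such a bound is the hard part. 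Likewise, ``the entry phase remains bounded'' requires a genuine argument: the $C^0_{\mathrm{loc}}(0,\infty)$ convergence in Theorem \ref{LNTthmmiya} is on compact sets bounded away from the origin, i.e.\ on compact $t$-intervals, whereas $t_\gamma\to-\infty$, so that convergence gives no control of $\phi_\gamma$ near $t_\gamma$; one must instead exploit the self-similar blow-up structure of $u_\gamma$ near the origin. So the blueprint is right and matches the literature, but as written this is a sketch with gaps at exactly the two places you correctly identify as the obstacles.
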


Note that since $j_n$ in general depends on $n$, one cannot conclude that 
the points $(p^i, \gamma_n)$ lie on $\mathcal{B}_i$. Also, without additional
information one cannot combine Theorem \ref{th:mia} and Theorem \ref{LNTthmmiya} to prove Theorem \ref{LNTmainthm} by limiting procedure. 
We remark that the oscillations and convergence of $\mathcal{B}_i$ was proved
by authors and Bonheure in \cite{BCF} for \eqref{LNT} with $v^p$ replaced by 
$\lambda e^v$. The proof in the present case is more involved and will be published separately. 

A strong indication that branches oscillate when $p^i < p_{JL}$ and do not oscillate when $p^i>p_{JL}$ is provided by the radial Morse index of our singular solution.
 Recall that the Morse index of $v$ satisfying \eqref{LNT}, denoted by $m(v)$, in the space of radial functions is the number of negative eigenvalues $\alpha$ counted with multiplicity of the following eigenvalue problem
\begin{equation}
\left\{
\begin{aligned}
-\Delta \phi+\phi - p u^{p-1} \phi &= \alpha \phi & \qquad &\text{in } B_R \setminus\{0\} \,, \\
\partial_\nu \phi &= 0 &\quad &\text{on } \partial B_R\,, \\
 \text{   $\phi$ is radially symmetric.}
\end{aligned}
\right. 
\end{equation}
Note that each turn of the bifurcation branch increases the Morse index of solutions, 
thus finite or infinite Morse index of the limit (singular solution) suggest respectively
non-oscillatory or oscillatory behaviour.

\begin{prop}\label{LNTmorse}
Let $U^\ast_{p^i}$ be a solution to \eqref{LNTwhole}, where $p^i$ is as in Theorem \ref{LNTmainthm}. Then $m(U^\ast_{p^i} )<\infty$ if $p^i>p_{JL}$, while $m(U^\ast_{p^i} )=\infty$ if $2^\ast -1 <p^i < p_{JL}$.
\end{prop}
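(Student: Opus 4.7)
The key observation is the asymptotic $U^\ast_p(r) \sim A_{p,N} r^{-\theta}$ as $r\to 0^+$, which yields
$$
p(U^\ast_p)^{p-1}(r) = \frac{c(p)}{r^2} + o\!\left(\frac{1}{r^2}\right),\qquad c(p) := p\,\theta(N-2-\theta).
$$
Since $p\theta = \theta+2$ with $\theta = 2/(p-1)$, a direct check shows that the quadratic $(\theta+2)(N-2-\theta)=(N-2)^2/4$ has roots $\theta_\pm=(N-4\pm 2\sqrt{N-1})/2$, with $\theta_- =\theta_{JL}:=2/(p_{JL}-1)$; moreover $c(p) > (N-2)^2/4$ iff $2^\ast-1 < p < p_{JL}$, while $c(p) < (N-2)^2/4$ iff $p > p_{JL}$. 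The proposition is then the classical Hardy dichotomy for the quadratic form $Q(\phi) := \int_{B_R}(|\nabla\phi|^2 + \phi^2 - p(U^\ast_{p^i})^{p-1}\phi^2)\,dx$.

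\smallskip\noindent\textbf{Finite case ($p^i > p_{JL}$).} Pick $c_1 \in (c(p^i), (N-2)^2/4)$ and $\delta>0$ so small that $p(U^\ast_{p^i})^{p-1}(r)\leq c_1/r^2$ on $(0,2\delta)$. Using an IMS partition of unity $\chi_1^2 + \chi_2^2 = 1$ with $\operatorname{supp}\chi_1 \subset B_{2\delta}$ and $\chi_2 \equiv 0$ on $B_\delta$, one has for every radial $\phi \in H^1(B_R)$
$$
Q(\phi) = Q(\chi_1\phi)+Q(\chi_2\phi) - \int_{B_R}(|\nabla\chi_1|^2+|\nabla\chi_2|^2)\phi^2\,dx.
$$
Since $\chi_1\phi\in H^1_0(B_{2\delta})$, Hardy's inequality gives $Q(\chi_1\phi)\geq [(N-2)^2/4-c_1]\int(\chi_1\phi)^2/|x|^2\,dx \geq 0$, while on $\operatorname{supp}\chi_2$ the potential $p(U^\ast_{p^i})^{p-1}$ is bounded. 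Hence $L := -\Delta + 1 - p(U^\ast_{p^i})^{p-1}$ (with Neumann conditions) differs from a non-negative self-adjoint operator with compact resolvent by a bounded perturbation, so only finitely many of its eigenvalues are negative.

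\smallskip\noindent\textbf{Infinite case ($p^i < p_{JL}$).} For every $n\geq 1$ we build a radial $\phi_n$ supported in a small annulus $A_n$, with the $A_n$ pairwise disjoint and $Q(\phi_n)<0$; the min-max principle then yields $m(U^\ast_{p^i})\geq n$ for all $n$. The change of variables $\phi(r)=r^{-(N-2)/2}v(\log r)$ converts $Q(\phi)$ restricted to $\phi$ supported in $\{e^{t_0-L}<r<e^{t_0}\}$ into a positive multiple of
$$
\int_{t_0-L}^{t_0}\!\Big(v_t^2 - \omega^2 v^2 + (e^{2t}+\eta(t))v^2\Big)\,dt,\qquad \omega^2 := c(p^i)-\frac{(N-2)^2}{4}>0,
$$
where $\eta(t)\to 0$ as $t\to-\infty$. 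Pick a smooth $v_0$ compactly supported in $(0,L)$ with $L>\pi/\omega$ and $\int(v_0')^2 -\omega^2 \int v_0^2 < 0$, and define $v_n(t) := v_0(t-t_n+L)$ for a sequence $t_n \to -\infty$ with $t_{n+1} < t_n - L$. The supports are disjoint, and for $-t_n$ large enough the contribution of $(e^{2t}+\eta(t))v_n^2$ is smaller than $\tfrac12|\int(v_0')^2-\omega^2\int v_0^2|$, so that $Q(\phi_n)<0$.

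The main technical point is in the infinite case: one must show that the oscillatory test functions suggested by the inverse-square limit still yield $Q(\phi_n)<0$ for the full operator $L$, by dominating the perturbations $e^{2t} v^2$ (from the lower-order $+\phi^2$ term) and $\eta(t)v^2$ (from $p(U^\ast_{p^i})^{p-1}-c(p^i)/r^2$) by the leading oscillatory balance. This is arranged by pushing each $t_n$ sufficiently far into $-\infty$.
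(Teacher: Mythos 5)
Your proof is correct and follows the same strategy as the paper (the Hardy dichotomy determined by comparing the coefficient $c(p)=p\theta(N-2-\theta)$ of the inverse-square limit of $p(U^\ast_p)^{p-1}$ to $(N-2)^2/4$, with the threshold precisely at $p_{JL}$), but it implements both halves differently, and in one place more cleanly than the paper.

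For the finite case, the paper decomposes only the \emph{potential} via an additive partition $\chi_0+\chi_1=1$ and applies Hardy's inequality directly to $\phi\in H^1_{rad}(B_R)$, which strictly speaking requires the Hardy inequality with boundary term (or an additional cutoff near $\partial B_R$), since $\phi$ is not in $H^1_0$; you instead use the IMS localization $\chi_1^2+\chi_2^2=1$, which decomposes the full quadratic form and lets you apply Hardy to $\chi_1\phi\in H^1_0(B_{2\delta})$, sidestepping the boundary issue entirely. This is a small but genuine improvement. Your concluding sentence ("$L$ differs from a non-negative operator with compact resolvent by a bounded perturbation") is loosely phrased, since $L$ itself has an unbounded negative potential; the precise statement one extracts from the IMS identity is that the Morse index of $Q$ is bounded by that of the form $\psi\mapsto Q(\psi)-C\|\psi\|_{L^2}^2$ restricted to functions vanishing on $B_\delta$, where the potential is bounded. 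Since this last point follows from the argument you set up, I would regard this as a presentational gap rather than a mathematical one.

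For the infinite case, the paper works directly in the original variable, taking $f_j(r)=r^{-(N-2)/2}\sin(\varepsilon_0\log r/2)\,\tilde\chi_j(r)$ supported on disjoint annuli $[r_{j+1},r_j]$ with $r_j=e^{-2\pi j/\varepsilon_0}$; these are exact solutions of a comparison ODE and the sign of $\mathcal{J}(f_j)$ follows from the pointwise potential bound \eqref{LNTup}. You instead pass to the Emden--Fowler variable $t=\log r$, reduce to the essentially constant-coefficient form $\int(v_t^2-\omega^2 v^2)$ with $\omega^2=c(p)-(N-2)^2/4>0$ plus perturbations $e^{2t}$ and $\eta(t)$ that vanish as $t\to-\infty$, and translate a fixed negative-energy bump $v_0$ to $-\infty$. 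This is the same oscillation mechanism in different clothing; your version makes the role of the threshold $\omega^2>0$ more transparent, while the paper's explicit $f_j$ avoids keeping track of the perturbation terms because the comparison is built into the pointwise inequality. Both buy essentially the same thing; neither is substantially shorter.
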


Finally, we briefly sketch main ideas of the proofs. To prove Theorem \ref{LNTmainthm}, we follow the  general framework used in
 \cite{BCF}. Specifically, Theorem \ref{LNTmainthm} is a consequence of continuity of the function $p\rightarrow R^i_p$ 
for all $i\in \N$ and 
\begin{equation}
\label{introe1}
R_i^p \rightarrow 0^+ \text{ as } p\rightarrow \infty \qquad \textrm{for all }
i\in \N. 
\end{equation}
To establish of \eqref{introe1}, as in 
 \cite{BCF}, we obtain very precise estimates of $U^\ast_p$ in a neighbourhood of  the origin. It is crucial to control the size of the neighbourhood with respect
 to parameter $p$. The proof is rather technical and requires very detailed
 information about solutions. Unlike in \cite{BCF}, our estimates cease to 
 hold before the first intersection point with $1$ that we denote $r_p$. At least 
 heuristically $r_p \approx \frac{1}{\sqrt{p}}$ (in fact the upper bound can 
 be made rigorous). Although we are not able to control the solution till $r_p$
 we obtain estimates on the interval of comparable length  $[0,\frac{\tilde{c}}{p} ]$, where $\tilde{c}$ is  sufficiently small constant.  The key ingredient is the negativity  of the higher order correction of $U^\ast_p$. Note that such estimate 
 would not suffice in \cite{BCF}, however since our constant equilibrium (equal to 1)
 is independent of $p$, we could proceed. 
 
Consequently, we prove that $(U^\ast_p)^\prime ( ,\frac{\tilde{c}}{p})$ converges to $0$ when $p\to \infty $. Using the decay of an energy functional, we show that $U^\ast_p (r)$ stays very close to $1$ for any $r\geq \frac{\tilde{c}}{p}$ and  we are conclude by using the Sturm-Piccone theorem. 

The continuity of the function $p \rightarrow R^i_p$ relies heavily on the uniqueness of $U^\ast_p$ and again the precise estimates at the origin on a controlled interval. 
 
Proposition \ref{LNTmorse} containing the estimates on the Morse index of $U^\ast_p$ relies on the the asymptotic behaviour of $U^\ast_p$ when $r\to 0$ and the Hardy's inequality.

\section{Proof of Theorem \ref{LNTmainthm}.}
In this section, we prove Theorem \ref{LNTmainthm}. It will be a consequence of the continuity of the function $p\rightarrow R_p^i$, for all $i\in \N$ and the fact that
\begin{equation}\label{s2e1} R_p^i \rightarrow 0^+,\ \text{ as } p\rightarrow \infty .\end{equation}
First, we prove that \eqref{s2e1} holds true. In all the following, we denote by $U^\ast :=U^\ast_p$ the singular solution of \eqref{LNTwhole}. Before proceeding, let us give several definitions and recall some facts. We begin by introducing a change of variables which was already used in \cite{miya1} to prove the existence of a singular solution.

Define
\begin{equation}
\label{LNTdefy}
\eta (\zeta ) = A_{p, N}^{-1} r^{\theta} U^\ast (r )-1, \qquad  -\zeta = m^{-1} \ln r \, ,
\end{equation}
where $A_{p,N}$ and $\theta$ are defined in Theorem \ref{LNTthmmiya} and 
$$m = [\theta(N - 2 - \theta)]^{- \frac{1}{2}} .$$
It is easy to check that $\eta$ satisfies
\begin{equation}
\label{LNTeqy}
\begin{cases}
\eta '' - \alpha  \eta ' +(p-1) \eta = -(1+\eta)^p + 1+ p\eta + m^2 e^{-2m\zeta}(1+\eta) ,\ \text{on}\ \R ,\\
\lim_{\zeta\rightarrow \infty} \eta (\zeta)=0, 
\end{cases}
\end{equation}
where
$$\alpha = m(N - 2 - 2\theta).$$
Next, we set $\tilde{\eta}=\eta -f$, where $f(\zeta)=D_p e^{-2m \zeta}$ and $D_p=\frac{m^2}{4m^2+2\alpha m +(p-1)}$. Then, a straightforward computation shows that
\begin{equation}\label{eqtildeeta}
\tilde{\eta}^{\prime \prime} -\alpha \tilde{\eta}' +(p-1)\tilde{\eta}= m^2 e^{-2m\zeta} \eta +\phi (\eta ) =: \tilde{g}, \end{equation}
where \begin{equation}\label{defph}\phi (\eta )= - ( (1+\eta)^p -1 - p \eta ).
\end{equation}

 We will also make intensive use of the following representation formula :
\begin{equation}
\label{repform}
\tilde{\eta}=\int_\zeta^\infty G_N (\sigma -\xi) \tilde{g} (\sigma ) d\sigma,
\end{equation}
where
$$
G_N(x)=
\begin{cases} 
\frac{e^{-\frac{\alpha}{2}x }}{\beta} \sin (\beta x), & \textrm{if } p-1> (\alpha /2)^2 \\  
\frac{e^{-\frac{\alpha}{2}x }}{\beta} \sinh (\beta x), &  \textrm{if } p-1< (\alpha /2)^2 \\ 
 e^{-\frac{\alpha}{2} x}x,  &  \textrm{if } p-1= (\alpha /2)^2
 \end{cases},
 \quad
 \textrm{for } 
   x\geq 0,
    \qquad\qquad
     G_N (x)=0,  \textrm{ if } x< 0.
 $$
and $\beta = \sqrt{|p-1 - (\alpha /2)^2|}$. Using that $\lim_{p\rightarrow \infty}\frac{1}{p-1} (\frac{\alpha}{2})^2 = \frac{N-2}{8}$ and 
$\frac{1}{p - 1}(\frac{\alpha}{2})^2 = \frac{(8 - 2\theta)^2}{8(8 - \theta)}$ when $N=10$, we deduce that, for $p$ large enough, $p-1> (\frac{\alpha}{2})^2$ if $N\leq 10$ and $p-1< (\frac{\alpha}{2})^2$ if $N> 10$.

We also define $w(r)=r^{\frac{N-1}{2}}(U^\ast (r)-1)$. By standard manipulations, one has
\begin{equation}
\label{eqw}
w^{\prime \prime}+ \left(\frac{(U^\ast)^p - U^\ast}{U^\ast-1} -\frac{(N-1)(N-3)}{4r^2} \right) w=0.
\end{equation}
The following asymptotics when $p\rightarrow +\infty$ of parameters are useful below 
\begin{gather}
\label{LNTlem1asy1}
\lim_{p\rightarrow \infty}\frac{\beta}{\sqrt{p}}=   \sqrt{\left|1- \frac{N-2}{8}\right|} \quad \textrm{if } N\neq 10, \qquad  
\beta = \sqrt{\frac{3(p - 1) - 1}{4(p - 1) - 1}}\quad \textrm{if }  N=10, \\
\lim_{p\rightarrow \infty} p\theta =2,\ \lim_{p\rightarrow \infty} A_{p,N} =1,\ \lim_{p\rightarrow \infty} \frac{\alpha}{\sqrt{p}} =\sqrt{\frac{N-2}{2}},\ \lim_{p\rightarrow \infty}\frac{ m}{\sqrt{p}}=\frac{1}{\sqrt{2(N-2)}},\ \lim_{p\rightarrow \infty} D_p= \frac{1}{4(N-1)}.
\end{gather}
If precise constants are not necessary, we use the notation $A\approx p^b$ for some real number $b$ if there exist two positive constants $c_1$ and $c_2$ such that, $c_1 \leq \frac{A}{p^b} \leq c_2$, for $p$ large. We also use the notation $A_p=O(p^{-b})$ if there exists a constant $C$ not depending on $p$, 
such that $|A_p| \leq C p^{-b} $ for any large $p$. First, we provide an upper bound (for $p$ large) for the first intersection of the singular solution with the value $1$.
Let us prove an auxiliary lemma first. 

\begin{lem}\label{l:boct}
There exists $0<\tilde{c}<1$ such that for any sufficiently large $p$,
\begin{equation}
\label{deftildec}
P_N <
\begin{cases}
\dfrac{1}{2}  \dfrac{1-  e^{-\frac{(\alpha + 8m) \pi}{2\beta}}}{1+ e^{-\frac{(\alpha + 8m) \pi}{2\beta}}} & \textrm{if } N < 10\,, \\
\dfrac{1}{2}  & \textrm{if } N \geq 10 \,,
\end{cases}
\end{equation}
where 
\begin{equation}
\label{defPN}
P_N := 
\frac{\left|\phi\left(f(\tilde{\zeta}_p ) \right) \right|}{ f(\tilde{\zeta}_p)}  \times 
\begin{cases} 
\dfrac{4}{(\alpha + 8m)^2 + 4\beta^2} (1 + e^{-\frac{(\alpha + 8m) \pi}{2\beta}})& \textrm{if } N < 10,\\ 
 \frac{1}{2\beta (\frac{\alpha}{2} +4m -\beta)} & \textrm{if } N \geq 10,
 \end{cases}
\end{equation}
and 
\begin{equation}
\label{relrzeta}
r_p = e^{- m_p \zeta_p}, \qquad \frac{\tilde{c}}{\sqrt{p}} = e^{- m_p \tilde{\zeta}_p} = \sqrt{\frac{f(\tilde{\zeta}_p)}{D_p}} \,.
\end{equation}
\end{lem}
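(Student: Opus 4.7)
The key mechanism is that the two factors defining $P_N$ scale oppositely in $p$: the nonlinear ratio $|\phi(f(\tilde{\zeta}_p))|/f(\tilde{\zeta}_p)$ grows like $p\tilde{c}^2$, while the constant multiplier built from the Green's function parameters decays at least like $1/p$. Their product is therefore $O(\tilde{c}^2)$, which can be made arbitrarily small by taking $\tilde{c}$ small, whereas the right-hand side of the target inequality is, asymptotically, a positive constant. Fixing $\tilde{c}$ small enough and then taking $p$ large enough will yield the conclusion.

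I would start from the direct identity $\eta_0 := f(\tilde{\zeta}_p) = D_p e^{-2m_p\tilde{\zeta}_p} = D_p\tilde{c}^2/p$ given by \eqref{relrzeta}. Since $D_p \to 1/(4(N-1))$ by \eqref{LNTlem1asy1}, we have $\eta_0 = O(1/p)$ and, crucially, $p\eta_0 = D_p\tilde{c}^2 = O(\tilde{c}^2)$ uniformly in large $p$. Expanding $\phi$ by the binomial series,
\[
|\phi(\eta_0)| \;=\; \sum_{k\geq 2}\binom{p}{k}\eta_0^k \;=\; \binom{p}{2}\eta_0^2\bigl(1 + O(p\eta_0)\bigr),
\]
so that $|\phi(\eta_0)|/\eta_0 \leq C_1 p^2\eta_0 \leq C_2(N)\, p\, \tilde{c}^2$ once the correction is absorbed (which holds for $\tilde{c}$ small, uniformly in large $p$).

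Next, I would control the constant multiplier via \eqref{LNTlem1asy1}. For $N<10$, the quantities $\alpha, m, \beta$ all grow like $\sqrt{p}$, so $(\alpha+8m)^2+4\beta^2 \geq c_N p$, while $(\alpha+8m)\pi/(2\beta)$ tends to a positive limit $L_N$, so the factor $1+e^{-(\alpha+8m)\pi/(2\beta)}$ is bounded; hence the multiplier is at most $C_3(N)/p$. For $N \geq 10$, the algebraic inequality $(N+6)^2 > (N-2)(N-10)$ combined with the asymptotics gives $\alpha/2+4m-\beta \geq c_N\sqrt{p}$, and together with $\beta \gtrsim \sqrt{p}$ one again obtains a multiplier bounded by $C_3(N)/p$. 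In both regimes $P_N \leq C_4(N)\tilde{c}^2$ for sufficiently large $p$. Since the right-hand side of the required inequality is either exactly $1/2$ or converges to $\tfrac{1}{2}\tanh(L_N/2) > 0$, it is bounded below by a positive constant depending only on $N$. Choosing $\tilde{c}\in(0,1)$ small enough so that $C_4(N)\tilde{c}^2$ is strictly below this lower bound yields the lemma for all large $p$.

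The main obstacle I anticipate is the careful bookkeeping at the threshold $N=10$, where the sign of $p-1-(\alpha/2)^2$ switches and $\beta$ ceases to scale like $\sqrt{p}$; here one must verify that the appropriate formula is used and that the multiplier still decays fast enough in $p$ to defeat the $p\tilde{c}^2$ growth coming from $\phi$. A secondary subtlety is the order of quantifiers: the correction $O(p\eta_0)$ in the binomial expansion has to be absorbed by a choice of $\tilde{c}$ made before sending $p\to\infty$, so the small parameter $\tilde{c}$ must be chosen uniformly across all large $p$.
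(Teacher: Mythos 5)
Your proposal follows essentially the same strategy as the paper: use $f(\tilde{\zeta}_p)=D_p\tilde{c}^2/p$ to show $|\phi(f(\tilde{\zeta}_p))|/f(\tilde{\zeta}_p)\lesssim p\tilde{c}^2$, show the explicit multiplier is $O(1/p)$ via the asymptotics of $\alpha,m,\beta$, conclude $P_N\lesssim\tilde{c}^2$, and choose $\tilde{c}$ small and independent of $p$. The only cosmetic difference is that the paper bounds $\phi$ more directly via the monotone limit $(1+k/p)^p\uparrow e^k$, giving $|\phi(k/p)|\le|e^k-k-1|\le c_N k^2$ for $k=D_p\tilde{c}^2\le 1$, while you use the binomial expansion with tail control; the two are equivalent once the $O(p\eta_0)$ correction is absorbed for small $\tilde{c}$.

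One point does need care, and you already sensed it: for $N\ge 10$ you assert $\beta\gtrsim\sqrt{p}$, but at $N=10$ one has $\beta^2=\frac{3(p-1)-1}{4(p-1)-1}\to\frac{3}{4}$, so $\beta$ is bounded and $2\beta\left(\frac{\alpha}{2}+4m-\beta\right)\approx\sqrt{p}$, not $p$; this would leave $P_{10}\approx\sqrt{p}\,\tilde{c}^2\to\infty$ if one used the $N\ge 10$ formula there. The paper's own assertion $2\beta\left(\frac{\alpha}{2}+4m-\beta\right)\approx p$ for $N\ge 10$ has the same defect at $N=10$. Given that $G_N$ is of sine type exactly for $N\le 10$ (since $p-1>(\alpha/2)^2$ there), the split in the definition of $P_N$ is presumably intended to be $N\le 10$ versus $N>10$; then at $N=10$ one uses $(\alpha+8m)^2+4\beta^2\approx p$ (still true since $\alpha+8m\approx\sqrt{p}$) and your argument closes. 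You should state this explicitly rather than relying on $\beta\gtrsim\sqrt{p}$.
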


\begin{proof}
First, notice that
\begin{equation}\label{tab}
(\alpha +8 m)^2 + 4 \beta^2 \approx 
p  \quad \textrm{for } N < 10 \,, \qquad 
2\beta \left(\frac{\alpha}{2}+4m - \beta\right) \approx  p \quad \textrm{for } N \geq 10 ,
\end{equation}
and
$$
\frac{\alpha +8m}{2\beta}\approx 1  \quad \textrm{for } N < 10 \,.
$$
In addition, since $f(\tilde{\zeta}_p) = D_p \tilde{c}^2/p$ and $D_p \approx 1$, we can choose $\tilde{c}$ sufficiently small such that $k := D_p \tilde{c}^2 \leq 1$. 
Then using that $p \mapsto (1 + k/p)^p$ increases to $e^k$, we obtain 
\begin{equation} \label{phes}
\left|\phi\left(\frac{k}{p}\right)\right| =  \left| \left(1 + \frac{k}{p}\right)^p  - k - 1\right| \leq |e^{k} - k - 1|  \leq c_N k^2 \,. 
\end{equation}
Consequently,
\begin{align}
\frac{\left|\phi\left(f(\tilde{\zeta}_p ) \right) \right|}{ f(\tilde{\zeta}_p)} &= \frac{p}{\tilde{c}^2 D_p} |\phi\left(D_p \tilde{c}^2/p \right) |   
 \leq c_N p D_p\tilde{c}^2 
\end{align}
and from \eqref{tab} follows
\begin{equation}\label{dfct}
P_N \leq  p D_p\tilde{c}^2 \frac{C_N }{p}  = C_N  D_p\tilde{c}^2 \,.
\end{equation}
Hence,  \eqref{deftildec} is satisfied for some sufficiently small $\tilde{c}$ independent of $p$ as desired. 
\end{proof}

In the rest of the proof, we fix $\tilde{c}$ such that Lemma \ref{l:boct} holds. 
 Fix any $\varepsilon_0 > 0$ and set
\begin{equation}\label{defzeta2ast}
\zeta_1^* := \inf\{ \zeta\geq \tilde{\zeta}_p : |\tilde{\eta}(z)| \leq (1+ \varepsilon_0) P_N f(z) \textrm{ for any } z \geq \zeta\},
\end{equation} 
To simplify notation, we set $P_{N, \varepsilon_0} := P_N(1 + \varepsilon_0)$. 
First, we show that $\zeta^\ast_1$ is well-defined.

\begin{lem}
For any $p>2^\ast -1$ and any $\varepsilon_0 >0$, we have $\zeta^\ast_1 <\infty$. 
\end{lem}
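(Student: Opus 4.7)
The key idea is to prove that $|\tilde\eta(\zeta)|/f(\zeta) \to 0$ as $\zeta \to \infty$, which is considerably stronger than the claim of the lemma. Since $\eta(\zeta) \to 0$ by Theorem \ref{LNTthmmiya}, I would fix $\zeta_0$ large enough that $|\eta(\zeta)| \leq 1/p$ for $\zeta \geq \zeta_0$; by the Taylor expansion used in the proof of Lemma \ref{l:boct} (see \eqref{phes}, applied with $k = p\eta$), this forces $|\phi(\eta)| \leq c_N p^2 \eta^2$ in the same range.

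Using the explicit form of $G_N$ --- which in all three cases satisfies $|G_N(u)| \leq C_p e^{-c_p u}$ for some $c_p > 0$, up to a harmless factor $u$ when $N=10$ --- the representation formula \eqref{repform} applied to $\tilde g = m^2 e^{-2m\zeta}\eta + \phi(\eta)$ yields, for $\zeta \geq \zeta_0$,
\begin{equation*}
|\tilde\eta(\zeta)| \leq C_1(p)\, M(\zeta)\, e^{-2m\zeta} + C_2(p)\, M(\zeta)^2,
\end{equation*}
where $M(\zeta) := \sup_{z \geq \zeta} |\eta(z)|$ and the two constants arise from the elementary integrals $\int_0^\infty |G_N(u)| e^{-2mu}\,du$ and $\int_0^\infty |G_N(u)|\,du$. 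Writing $\eta = \tilde\eta + f$, taking the supremum over $z \geq \zeta$, and absorbing the resulting small coefficients (valid for $\zeta$ sufficiently large and $M(\zeta_0)$ sufficiently small) gives $M(\zeta) \leq 2 D_p e^{-2m\zeta} = 2 f(\zeta)$. Reinserting this bound into the displayed inequality produces $|\tilde\eta(\zeta)| = O(e^{-4m\zeta}) = o(f(\zeta))$, so indeed $|\tilde\eta(\zeta)|/f(\zeta) \to 0$.

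Since $(1+\varepsilon_0) P_N > 0$ is a fixed positive constant, this immediately yields $|\tilde\eta(z)| \leq (1+\varepsilon_0) P_N f(z)$ for all $z$ larger than some finite threshold, hence $\zeta_1^\ast < \infty$ as desired.

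The main technical subtlety is the bootstrap: the naive bound $|\eta| \leq M(\zeta)$ alone gives $|\tilde\eta| = O(M(\zeta)\, e^{-2m\zeta} + M(\zeta)^2)$, and the $M(\zeta)^2$ contribution by itself does not decay at the rate $f \sim e^{-2m\zeta}$. Closing the argument therefore requires the structural identity $\eta = \tilde\eta + f$ to upgrade $M$ from the mere qualitative decay $o(1)$ to the quantitative rate $O(e^{-2m\zeta})$ before reinserting into the quadratic term $\phi(\eta)$. For the present qualitative lemma no optimal constants are needed, only the convergence of the ratio to zero.
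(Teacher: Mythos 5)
Your argument is correct, and it rests on the same core mechanism the paper uses: the representation formula \eqref{repform}, the near-quadratic bound on $\phi$ near the origin, the qualitative decay $\eta(\zeta)\to 0$, and a bootstrap through the identity $\eta=\tilde\eta+f$ to upgrade that qualitative decay to a rate. The difference is in how the fixed-point estimate is closed. The paper works in $L^1$: it first bounds $\int_\zeta^\infty|\tilde\eta|$ via Young's convolution inequality with $\|G_N\|_{L^1}=C_{N,p}$, absorbs the small coefficient $2c_N\varepsilon p\,C_{N,p}$, and only then converts to a pointwise bound using $\|G_N\|_{L^\infty}$. You instead work pointwise from the start by introducing $M(\zeta)=\sup_{z\geq\zeta}|\eta(z)|$, which is monotone and tends to $0$, so absorption is immediate once $\zeta$ is large; this avoids the detour through Young's inequality and yields the sharper rate $|\tilde\eta(\zeta)|=O(e^{-4m\zeta})$, whereas the paper only exhibits $|\tilde\eta|/f\to 0$ by letting the auxiliary $\varepsilon$ shrink. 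One small point worth flagging: \eqref{phes} is stated for $k>0$, but in your step you apply the quadratic bound $|\phi(\eta)|\leq c_N p^2\eta^2$ to an $\eta$ of unspecified sign; the bound is still valid since $|(1+x)^p-1-px|\leq\tfrac{p(p-1)}{2}(1+|x|)^{p-2}x^2\leq c_N p^2 x^2$ for $|x|\leq 1/p$, but that short verification should be included rather than cited to \eqref{phes} directly.
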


\begin{proof}
Fix any $\varepsilon > 0$. 
First, notice that  
 $$
  \|G_N\|_{L^1}\leq C_{N,p}=
  \begin{cases} 
  \frac{2}{\alpha \beta} ,\ \textrm{if} \ N < 10,\\
 \frac{2}{\alpha^2}  
  \frac{1}{\beta (\alpha - 2\beta)},\ \textrm{if} \ N>10 ,
  \end{cases}
  $$
  and $C_{N,p} \approx p^{-1}$ if $N \neq 10$ and $C_{N,p} \approx  p^{-1/2}$ if $N = 10$.
Using the representation formula \eqref{repform} and Young's inequality for convolutions, we obtain 
\begin{equation}\label{imca}
\int_\zeta^\infty |\tilde{\eta} (\sigma )|d\sigma \leq C_{N,p} \int^\infty_{\zeta}   |\tilde{g}(\sigma)|  d\sigma \, .
\end{equation}
Since the function $x \mapsto |\phi(x)|/x$ is increasing, then \eqref{phes} implies
\begin{equation}\label{LNTiqe}
\frac{|\phi(\eta)|}{\eta} \leq 
\frac{|\phi(\varepsilon/p)|}{\varepsilon/p} \leq 
c_N p \varepsilon
 \qquad \textrm{for all } 0 < \eta  \leq \frac{\varepsilon}{p} \,. 
\end{equation}
On the other hand, since $\eta(\zeta ) \to 0$ as $\zeta \to \infty$ (see \eqref{LNTeqy}), we deduce that there exists $\zeta_0>0$ depending on $p$ and $\eta$ such that 
$|\eta (\zeta )| \leq \varepsilon/p$  for any $\zeta \geq \zeta_0$, and consequently by the definition of $\tilde{\eta}$
\begin{equation}
\label{LNTdefzeta0}
 |\phi(\eta (\zeta )| \leq 
c_N p \varepsilon |\eta(\zeta)|\leq   
 c_N p \varepsilon ( | \tilde{\eta} (\zeta )| + |D_p e^{-2m\sigma} |).
\end{equation}
Recalling the definition of $\tilde g$ (see \eqref{eqtildeeta}), one has for $\sigma \geq \zeta_0$, 
\begin{equation*}
|\tilde{g} (\sigma)| \leq (c_N\varepsilon p + m^2 e^{-2m\sigma}) (|\tilde{\eta}(\sigma)| + D_pe^{-2m\sigma}) \,. 
\end{equation*}
Since $m \approx \sqrt{p}$ and $D_p e^{-2m \zeta} = f(\zeta) \leq \varepsilon/p$ for   $\zeta \geq  \zeta_0$, then 
$m^2 e^{-2m\sigma} \leq c_N \varepsilon$,  and therefore
\begin{equation}
\label{LNTlem1e2}
|\tilde{g}(\sigma)| \leq  2c_N \varepsilon p (|\tilde{\eta}(\sigma)| + D_pe^{-2m\sigma}) \,.
\end{equation}
Substituting this estimate into \eqref{imca}, we obtain, for $\zeta \geq  \zeta_0$ and any sufficiently large $p \geq c_0$, 
\begin{equation}
\left(1 - 2 c_N \varepsilon p C_{N,p} \right) \int_{\xi}^\infty |\tilde{\eta} (\sigma)|\, d\sigma \leq \frac{c_N \varepsilon p D_p}{m} C_{N,p} e^{-2m \zeta} .
\end{equation}
We decrease $\varepsilon_0$ if necessary to have
\begin{equation}
\label{LNTlem1asy4}
\varepsilon_0 <\frac{1 }{4c_N pC_{N,p}}, \quad \textrm{and therefore } \left(1 -2c_N \varepsilon p C_{N,p} \right) \geq \frac{1}{2}, \qquad  \qquad \textrm{for any } \varepsilon \in (0, \varepsilon_0) \,.
\end{equation}
Hence, 
\begin{equation}\label{req}
 \int_{\xi}^\infty |\tilde{\eta} (\sigma)|\, d\sigma \leq \frac{c_N D_p}{m} e^{-2m \zeta}.
\end{equation}
Next, we use  $\|G_N\|_{L^\infty} \leq C_N$ combined with 
\eqref{repform}, \eqref{LNTlem1e2}, \eqref{req} and Young convolution inequality to get that 
\begin{equation}
|\eta(\zeta)| \leq C_N \int_{\zeta}^\infty |\tilde{g}(\sigma)|\, d \sigma \leq
 C_N \varepsilon p 
  \int_\zeta^\infty  (D_p e^{- 2m\sigma} + |\tilde{\eta}(\sigma)|) \ d\sigma
\leq  C_N \varepsilon p \left( \frac{D_p}{m}  + \frac{c_N D_p}{m} \right) e^{-2m \zeta} \,.
\end{equation}
Then, the definition of $\varepsilon$ yields 
\begin{equation}
|\eta(\zeta)| \leq \frac{c_ND_p}{m C_{N, p}}  e^{-2m \zeta} \qquad \textrm{for any } \zeta \geq \zeta_0 \,.
\end{equation}
Since $D_p \approx 1$, $m \approx \sqrt{p}$, and $C_{N, p} \geq 1$ for large $p$, we obtain the desired conclusion. 
\end{proof}

\begin{lem}
\label{lemetaneg}
 For any small $\varepsilon_0 > 0$, there exists $p_0 > 0$ such that, for each $p \geq p_0$, we have $\eta \leq 0$ on $[\zeta_1^*, \infty)$ where $\zeta_1^\ast$ is defined in \eqref{defzeta2ast}.
\end{lem}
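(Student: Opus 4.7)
The plan is to refine the symmetric bound $|\tilde{\eta}| \leq P_{N,\varepsilon_0} f$ on $[\zeta_1^*, \infty)$, which comes from the definition of $\zeta_1^*$, into the one-sided bound $\tilde{\eta} \leq -f$. Since $\eta = \tilde{\eta} + f$, this is equivalent to $\eta \leq 0$. The argument relies on the representation formula \eqref{repform} and the negativity of the nonlinear forcing $\tilde{g}$.

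First, I would establish that $\tilde{g}(\zeta) = m^2 e^{-2m\zeta} \eta(\zeta) + \phi(\eta(\zeta))$ is negative on $[\zeta_1^*, \infty)$ for all $p$ sufficiently large. Since $|\tilde{\eta}| \leq P_{N,\varepsilon_0} f$ with $P_{N,\varepsilon_0} < 1$ by Lemma \ref{l:boct} (taking $\varepsilon_0$ small), $\eta$ is trapped in a positive interval comparable to $f$. The Taylor expansion in \eqref{phes} gives $\phi(\eta) \lesssim -\binom{p}{2}\eta^2$ for $\eta > 0$ small. The comparison $\binom{p}{2}\eta / m^2 \approx \binom{p}{2}D_p / m^2 \approx p(N-2)/(4(N-1))$, using the asymptotics in \eqref{LNTlem1asy1}, diverges as $p \to \infty$, so the quadratic nonlinearity dominates the positive linear term $m^2 e^{-2m\zeta}\eta$, forcing $\tilde{g} < 0$.

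Second, I would inject this sign into \eqref{repform}: $\tilde{\eta}(\zeta) = \int_\zeta^\infty G_N(\sigma-\zeta)\tilde{g}(\sigma)\,d\sigma$. For $N \geq 10$, the kernel $G_N$ is non-negative, which directly gives $\tilde{\eta}(\zeta) \leq 0$, together with an explicit lower bound obtained by evaluating $\int_0^\infty G_N(x)e^{-4mx}dx$ in closed form. For $N < 10$, $G_N$ oscillates, and one splits the integral into successive half-periods of length $\pi/\beta$; the damping factor $e^{-\alpha x/2}$ combined with the exponential decay of $\tilde{g}$ ensures that the first half-period (which contributes with the desired sign) dominates. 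The quantitative threshold \eqref{deftildec} of Lemma \ref{l:boct} is calibrated precisely so that cancellations from subsequent lobes cannot overturn the sign.

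The main obstacle is upgrading $\tilde{\eta} \leq 0$ to the quantitative $\tilde{\eta} \leq -f$ uniformly on $[\zeta_1^*, \infty)$, which is what the statement $\eta \leq 0$ actually requires. This calls for a sufficiently strong lower bound on $|\tilde{\eta}|$ obtained from the leading behaviour $\tilde{g}(\sigma) \sim -\binom{p}{2}D_p^2 e^{-4m\sigma}$; plugging this into the representation formula and invoking the smallness of the interval set by $\tilde{c}$ in Lemma \ref{l:boct} via the relation \eqref{relrzeta} (which fixes the scale $e^{-2m\tilde{\zeta}_p} = \tilde{c}^2/p$) should yield $|\tilde{\eta}(\zeta)| \geq f(\zeta)$. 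The strict inequality \eqref{deftildec}, together with the choice of $\tilde c$, is precisely what makes this comparison valid for $p \geq p_0$ large, completing the proof.
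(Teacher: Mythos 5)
There is a genuine problem here, although it originates in a typo in the statement itself: the lemma should assert $\tilde{\eta}\leq 0$ on $[\zeta_1^*,\infty)$, not $\eta\leq 0$. That this is the intended claim is clear from the paper's own proof (which ends with ``$\tilde{\eta}(\zeta)\leq 0$ \dots as desired''), from the way Lemma \ref{tildeetapetit} quotes it, and from Remark \ref{pbou}, which records $0\leq \eta\leq f$ — so $\eta$ is in fact \emph{nonnegative} there. Reading the statement literally, you set out to upgrade $\tilde{\eta}\leq 0$ to $\tilde{\eta}\leq -f$, i.e.\ $|\tilde{\eta}|\geq f$. This is impossible: by the very definition \eqref{defzeta2ast} of $\zeta_1^*$, on $[\zeta_1^*,\infty)$ one has $|\tilde{\eta}|\leq (1+\varepsilon_0)P_N f$, and Lemma \ref{l:boct} gives $P_N<1/2$, so $|\tilde{\eta}|<f$ and hence $\eta=\tilde{\eta}+f>0$ throughout that interval. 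Your third paragraph therefore tries to prove a bound that contradicts the standing hypothesis on the interval; no choice of $\tilde{c}$ or $p_0$ can make it work, and the representation formula will never produce $|\tilde{\eta}|\geq f$ there.

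Your first two paragraphs are essentially the paper's argument for the correct statement $\tilde{\eta}\leq 0$. For $N>10$ it is exactly as you say: $\eta\geq (1-P_{N,\varepsilon_0})f>0$, the quadratic part of $\phi$ dominates the linear term since $\tfrac{p(p-1)}{2}\eta\gtrsim \tfrac{p(p-1)}{2}(1-P_{N,\varepsilon_0})f\gg m^2 f/D_p$, so $\tilde{g}\leq 0$, and positivity of $G_N$ concludes. (Note, though, that at $N=10$ one has $p-1>(\alpha/2)^2$ for large $p$, so the kernel is the oscillating sine kernel; your split should be $N\leq 10$ versus $N>10$.) For the oscillatory case, negativity of $\tilde{g}$ alone does not suffice, and your ``first half-period dominates'' heuristic is not what the paper does: it pairs consecutive half-periods and proves the pointwise inequality $\tilde{g}(\sigma)-e^{-\frac{\alpha\pi}{2\beta}}\tilde{g}(\sigma+\pi/\beta)\leq 0$, which rests on the two-sided comparison \eqref{acnd} between $\phi((1-P_{N,\varepsilon_0})f(\sigma))$ and $\phi((1+P_{N,\varepsilon_0})e^{-2\pi m/\beta}f(\sigma))$ together with control of the $m^2e^{-2m\sigma}\eta$ term; this is precisely where the constant in \eqref{deftildec} is used. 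To complete your argument you would need to supply such a quantitative pairwise (or lobe-by-lobe) estimate, and then stop at $\tilde{\eta}\leq 0$ rather than pushing to $\eta\leq 0$.
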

\begin{proof}
We first assume that $N\leq 10$. We rewrite \eqref{repform} as
\begin{equation}
\label{KSdefetainto0}
\tilde{\eta} (\zeta )= \int^\infty_{\zeta} G_N(\sigma - \zeta) \tilde{g}(\sigma )  d\sigma =: 
 \int^\infty_{\zeta} F(\zeta, \sigma)  d\sigma 
\end{equation}
and
\begin{align}\label{frl}
\int^\infty_{\zeta} F(\zeta, \sigma)  d\sigma &= \sum_{k = 0}^\infty \int_{\zeta+\frac{2k\pi}{\beta}}^{\zeta+\frac{(2k+1)\pi}{\beta}} F(\zeta, \sigma) d\sigma + \int_{\zeta +\frac{(2k+1)\pi}{\beta}}^{\zeta + \frac{(2k+2)\pi}{\beta}} F(\zeta, \sigma) d\sigma   
\nonumber\\
&=  \sum_{k = 0}^\infty \int_{\zeta+\frac{2k\pi}{\beta}}^{\zeta+\frac{(2k+1)\pi}{\beta}} F(\zeta, \sigma) + F\left(\zeta, \sigma + \frac{\pi}{\beta}\right) d\sigma  \,,
\end{align}
where
\begin{align}
F(\zeta, \sigma) + F\left(\zeta, \sigma + \frac{\pi}{\beta}\right) = G_N(\sigma - \zeta) \left(\tilde{g}(\sigma) -   e^{-\frac{\alpha\pi}{2\beta}} \tilde{g}\left(\sigma + \frac{\pi}{\beta}\right)\right) \,.
\end{align}
Recall, for any $\sigma \geq \zeta_1^*$ we have 
$|\tilde{\eta}(\sigma)| \leq P_{N,\varepsilon_0} f(\sigma)$, with $1 >  P_{N,\varepsilon_0}$ for any sufficiently small $\varepsilon_0 > 0$. Since $\phi$ is decreasing on $(0, \infty)$ and $f \pm \tilde{\eta} \geq 0$ on $[\zeta_1^*, \infty)$, one has 
\begin{multline}
\phi((f + \tilde{\eta})(\sigma)) - e^{-\frac{\alpha\pi}{2\beta}} \phi \left( (f + \tilde{\eta}) \left( \sigma+ \frac{\pi}{\beta} \right) \right)
 \leq \phi((f - |\tilde{\eta}|)(\sigma)) - e^{-\frac{\alpha\pi}{2\beta}} \phi \left( (f + |\tilde{\eta}|) \left( \sigma+ \frac{\pi}{\beta} \right) \right)  
\\
\begin{aligned}
&\leq \phi((1 - P_{N,\varepsilon_0}) f(\sigma)) - e^{-\frac{\alpha\pi}{2\beta}} \phi \left((1 + P_{N,\varepsilon_0})f \left( \sigma+ \frac{\pi}{\beta} \right) \right)  
\\
&\leq  \phi((1 - P_{N,\varepsilon_0}) f(\sigma)) - e^{-\frac{\alpha\pi}{2\beta}} \phi \left((1 + P_{N,\varepsilon_0}) e^{-\frac{2\pi m}{\beta}} f \left( \sigma \right) \right).
\end{aligned}
\end{multline}

We claim that  for any sufficiently small $\varepsilon_0, \varepsilon_1 > 0$ and any sufficiently large $m$ (that is large $p$), one has
\begin{equation}\label{acnd}
\phi \left( (1 -P_{N,\varepsilon_0}) z \right) \leq e^{-\frac{\alpha \pi}{2\beta}} \phi \left( (1 + P_{N,\varepsilon_0}) e^{-\frac{2\pi m}{\beta}} z \right) 
- 2 (1+P_{N,\varepsilon_0}) \frac{m^2}{D_p}  z^2  , \qquad \textrm{for any } z \in \left[0, K_M/ p\right] \,.
\end{equation}
Indeed, it is easy to check that both value and the value of the derivatives of both sides in \eqref{acnd} vanish at $z = 0$. Thus, it suffices to verify that 
the second derivative of the right hand side is larger than the second derivative of the left hand side on the interval $ \left[0, K_M/ p\right]$. It is 
equivalent to 
\begin{multline}
p(p-1) (1 -P_{N,\varepsilon_0})^2 \left(1 +  (1 -P_{N,\varepsilon_0}) z \right)^{p-2} \\
\geq 
p(p-1)(1 + P_{N,\varepsilon_0})^2 e^{- \frac{\pi}{2\beta} (\alpha + 8m)}  \left( 1 + (1 + P_{N,\varepsilon_0}) e^{-\frac{2\pi m}{\beta}} z \right)^{p-2} +  4 (1+P_{N,\varepsilon_0}) \frac{m^2}{D_p}\,.
\end{multline}
However, by \eqref{deftildec}, 
$$
P_{N} < \frac{1}{2} \frac{1-  e^{- \frac{\pi}{2\beta} (\alpha + 8m)} }{1+ e^{- \frac{\pi}{2\beta} (\alpha + 8m)}} < \frac{1}{2}
$$
and by \eqref{LNTlem1asy1} 
\begin{align}
\frac{\alpha + 8m}{2\beta} &= \frac{N + 6}{\sqrt{(N - 2)(10 - N)}} + O(1) \geq 2\sqrt{2} + O(1) \qquad \textrm{for } N \in [3, 10) \,, \\
\frac{m}{\beta} &= \frac{2}{\sqrt{(N - 2)(10 - N)}} + O(1) \geq \frac{1}{2} + O(1)
 \qquad \textrm{for } N \in [3, 10) \,.
\end{align}
For $N = 10$, the left hand side diverges to infinity, so the latter estimates are still valid. 
Thus, for any sufficiently small $\varepsilon_0$ and large $p$, we have
\begin{align}
 1 -P_{N,\varepsilon_0} \geq \frac{1}{2} &\geq  \frac{3}{2} e^{-\pi + O(1)} 
 \geq (1 + P_{N,\varepsilon_0}) e^{-\frac{2\pi m}{\beta}} \\
 (1 -P_{N,\varepsilon_0})^2 &\geq \frac{1}{4} \geq \frac{9}{2} e^{-2\sqrt{2} \pi + O(1)}
 \geq 2(1 + P_{N,\varepsilon_0})^2 e^{- \frac{\pi}{2\beta} (\alpha + 8m)} 
\end{align}
So, we obtain, for any small $\varepsilon_0 > 0$, and large $p$
\begin{multline}
 (1 -P_{N,\varepsilon_0})^2 \left(1 +  (1 -P_{N,\varepsilon_0}) z \right)^{p-2} -
(1 + P_{N,\varepsilon_0})^2 e^{- \frac{\pi}{2\beta} (\alpha + 8m)}  \left( 1 + (1 + P_{N,\varepsilon_0}) e^{-\frac{2\pi m}{\beta}} z \right)^{p-2} \\
\geq \frac{1}{2}
(1 -P_{N,\varepsilon_0})^2  \left(1 +  (1 -P_{N,\varepsilon_0}) z \right)^{p-2} \geq  \frac{1}{2}
(1 -P_{N,\varepsilon_0})^2 
\,.
\end{multline}
Since $m^2 \approx p$,  \eqref{acnd} follows for any sufficiently large $p$.

In addition, using that $f$ is decreasing and that $|\tilde{\eta} (\sigma )|\leq P_{N,\varepsilon_0}f(\sigma)$, we have, for $\sigma \geq \zeta_1^* $, 
\begin{align*}
m^2 e^{-2m\sigma} \left( (\tilde{\eta} +f)(\sigma ) - e^{-\frac{\pi}{\beta}(2m+\alpha/2) } (\tilde{\eta} +f) \left(\sigma +\dfrac{\pi}{\beta}\right)  \right) &\leq 
2 (1+P_{N,\varepsilon_0}) m^2 e^{-2m\sigma} f (\sigma)  \\
&= 2 (1+P_{N,\varepsilon_0}) \frac{m^2}{D_p}  f^2(\sigma) .
\end{align*}

Therefore, recalling that $\tilde{g}(\zeta )= \phi (\eta(\zeta ))+m^2 e^{-2m\zeta} \eta(\zeta )$, the previous bound combined with \eqref{acnd} implies
\begin{equation}\label{ginq}
\tilde{g}(\sigma) -   e^{-\frac{\alpha\pi}{2\beta}} \tilde{g}\left(\sigma + \frac{\pi}{\beta}\right) \leq 0.
\end{equation}
Since $G_N \geq 0$ on $(\zeta+\frac{2k\pi }{\beta},\zeta+\frac{2(k+1)\pi }{\beta} )$, we obtain that 
$$F(\zeta, \sigma) + F\left(\zeta, \sigma + \frac{\pi}{\beta}\right) \leq 0.$$
Using \eqref{frl} and \eqref{KSdefetainto0}, this established the proof 
for $N \leq 10$.

Next, assume $N > 10$ and notice that $G_N \geq 0$ in this case.  Also, since $|\tilde{\eta}(\sigma)| \leq P_{N, \varepsilon_0} f(\sigma)$ on $[ \zeta_1^* ,\infty)$ and 
$P_{N, \varepsilon_ 0} < 1$ for any sufficiently small $\varepsilon_0$, we obtain that $\eta = f + \tilde{\eta} \geq 0$ on $[\zeta_1^*, \infty)$. Since $(1+x)^p-1-px\geq \dfrac{p(p-1)}{2}x^2$ for $x \geq 0$, then for any $\zeta \geq \zeta_1^*$ 
\begin{align*}
\tilde{\eta} (\zeta )\leq  \int^\infty_{\zeta} G_N(\sigma - \zeta) \Big(m^2 e^{-2\sigma} \eta(\sigma )- \frac{p(p-1)}{2} \eta^2 (\sigma ) \Big)  d\sigma .
\end{align*}
Also, since $\eta \geq 0$, we have 
\begin{align}
m^2 e^{-2\sigma} \eta(\sigma )- \frac{p(p-1)}{2} \eta^2 (\sigma ) &\leq \eta(\sigma ) \left( m^2\frac{f(\sigma)}{D_p} -  \frac{p(p-1)}{2}(f(\sigma) - |\tilde{\eta}(\sigma)|) 
\right)
\\
&\leq \eta(\sigma ) f(\sigma) \left( \frac{m^2}{D_p} - \frac{p(p-1)}{2} (1 - P_{N, \varepsilon_0}) \right) \leq 0 \,,
\end{align}
where we used $\frac{m^2}{D_p}\approx p$ in the last  inequality. Thus $\tilde{\eta}(\zeta) \leq 0$ for each $\zeta \geq \zeta_1^*$ as desired.

\end{proof}

\begin{lem}
\label{tildeetapetit}
 For any sufficiently small $\varepsilon_0 > 0$, there exists $p_0$ such that, for each $p \geq p_0$, we have 
$\zeta_1^* = \tilde{\zeta}_p$, where $\zeta_1^\ast$ is defined in \eqref{defzeta2ast}. In particular, 
$|\tilde{\eta} (\tilde{\zeta_p} )|< \frac{f (\tilde{\zeta}_p)}{2}$.
\end{lem}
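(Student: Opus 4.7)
The plan is to argue by contradiction. Assume $\zeta_1^* > \tilde{\zeta}_p$. By the definition of $\zeta_1^*$ as an infimum together with continuity of $\tilde\eta$ and $f$, the bound $|\tilde{\eta}(z)| \leq P_{N,\varepsilon_0} f(z)$ holds on $[\zeta_1^*, \infty)$ but must be saturated at the endpoint, i.e.\ $|\tilde{\eta}(\zeta_1^*)| = P_{N,\varepsilon_0} f(\zeta_1^*)$. I would then derive a strict upper bound $|\tilde{\eta}(\zeta_1^*)| < P_{N,\varepsilon_0} f(\zeta_1^*)$, giving the contradiction.

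The key observation is that Lemma \ref{lemetaneg} provides $\tilde{\eta} \leq 0$ on $[\zeta_1^*, \infty)$. Combined with $|\tilde{\eta}(\sigma)| \leq P_{N,\varepsilon_0} f(\sigma)$ from the definition of $\zeta_1^*$, this pins down $\eta(\sigma) \in [(1-P_{N,\varepsilon_0})f(\sigma), f(\sigma)]$. Since $|\phi|$ is increasing on $[0,\infty)$ (as $\phi'(x) = -p((1+x)^{p-1}-1) < 0$ for $x > 0$), I obtain the sharp estimate $|\phi(\eta(\sigma))| \leq |\phi(f(\sigma))|$. This is crucial: it eliminates an extra multiplicative factor of $(1+P_{N,\varepsilon_0})^2$ that would otherwise defeat the strict inequality.

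Next, I would plug into the representation formula \eqref{repform} to bound $|\tilde{\eta}(\zeta_1^*)|$ in terms of an integral of $G_N$ against $|\phi(f(\sigma))| + m^2 e^{-2m\sigma} f(\sigma)$. For $N \geq 10$ where $G_N \geq 0$, I would use the Taylor expansion $|\phi(f(\sigma))| = (1+o(1))\frac{p(p-1)}{2} f(\sigma)^2$ together with $\int_0^\infty G_N(x) e^{-4mx}\, dx \leq \frac{1}{2\beta(\alpha/2 + 4m - \beta)}$ (valid since $\sinh(\beta x) \leq e^{\beta x}/2$) to get
\begin{equation}
\int_{\zeta_1^*}^\infty G_N(\sigma-\zeta_1^*) |\phi(f(\sigma))|\, d\sigma \leq (1+o(1)) P_N f(\zeta_1^*) \cdot \frac{f(\zeta_1^*)}{f(\tilde{\zeta}_p)} \leq (1+o(1)) P_N f(\zeta_1^*),
\end{equation}
where I used $\frac{p(p-1)}{2} = (1+o(1))|\phi(f(\tilde\zeta_p))|/f(\tilde\zeta_p)^2$ and $f(\zeta_1^*) \leq f(\tilde{\zeta}_p)$. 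The $m^2 e^{-2m\sigma} f$ term is subleading: the same computation replaces $p(p-1)/2 \approx p^2/2$ by $m^2/D_p \approx p$, costing a factor $O(1/p)$. Altogether $|\tilde{\eta}(\zeta_1^*)| \leq (1+o(1)) P_N f(\zeta_1^*)$, which is strictly less than $(1+\varepsilon_0)P_N f(\zeta_1^*) = P_{N,\varepsilon_0} f(\zeta_1^*)$ for $p$ large enough.

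For $N < 10$, $G_N$ oscillates with period $2\pi/\beta$, so I would adopt the pairing strategy from Lemma \ref{lemetaneg}, splitting the integral into blocks of length $\pi/\beta$ and estimating $F(\zeta_1^*,\sigma) + F(\zeta_1^*,\sigma+\pi/\beta)$ using the analog of \eqref{ginq} --- this time with the sharper bound $|\phi(\eta)| \leq |\phi(f)|$ rather than the $(1-P_{N,\varepsilon_0})$-shifted bound \eqref{acnd}. The pairing naturally produces the factor $(1+e^{-(\alpha+8m)\pi/(2\beta)})$ appearing in \eqref{defPN}. The hardest part will be tracking constants precisely through this pairing so that the resulting bound aligns with $P_N$ in \eqref{defPN} rather than with some larger $O(1)$ multiple. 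Finally, once $\zeta_1^* = \tilde{\zeta}_p$ is established, the second conclusion follows from evaluating the defining inequality at $z = \tilde{\zeta}_p$: one has $|\tilde{\eta}(\tilde{\zeta}_p)| \leq P_{N,\varepsilon_0} f(\tilde{\zeta}_p) = (1+\varepsilon_0) P_N f(\tilde{\zeta}_p)$, and \eqref{deftildec} together with $\varepsilon_0$ sufficiently small yields $(1+\varepsilon_0) P_N < 1/2$.
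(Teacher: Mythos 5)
Your plan is structurally the same as the paper's: argue by contradiction, exploit $\tilde{\eta}\leq 0$ from Lemma~\ref{lemetaneg} to upgrade the crude bound to $|\phi(\eta)|\leq|\phi(f)|$, run the representation formula, pair the oscillating kernel for $N<10$, and integrate directly against the positive kernel otherwise. The contradiction mechanism (saturation at $\zeta_1^*$ versus a strict bound derived from the representation formula) is a cosmetic repackaging of the paper's continuity argument, and the observation that $|\phi(\eta)|\leq|\phi(f)|$ --- equivalently, that $\phi$ is decreasing and $\tilde\eta\leq 0$ so $\phi(\eta)\geq\phi(f)$ --- is indeed the crucial ingredient that avoids a lossy extra factor of $(1+P_{N,\varepsilon_0})^2$.

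Two points need tightening. First, the claim $|\phi(f(\sigma))|=(1+o(1))\frac{p(p-1)}{2}f(\sigma)^2$ is \emph{not} an $o(1)$-statement in $p$: at $\sigma=\tilde\zeta_p$ one has $pf(\tilde\zeta_p)=D_p\tilde c^2$, which is bounded below, so the relative error in the second-order Taylor expansion is $\Theta(\tilde c^2)$, not $o(1)$. The computation nevertheless survives because $|\phi(x)|/x^2$ is increasing, so the same factor $|\phi(f(\tilde\zeta_p))|/f(\tilde\zeta_p)^2$ dominates both the integrand and the normalization entering $P_N$, and these cancel exactly. This is precisely what the paper achieves cleanly by using the monotonicity of $\phi(x)/x^2$ (to get $|\phi(f(\sigma))|\leq|\phi(f(\zeta))|e^{-4m(\sigma-\zeta)}$) followed by the monotonicity of $\phi(x)/x$ (to compare with $\tilde\zeta_p$); you should replace the Taylor-expansion phrasing by this two-step monotonicity argument. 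Second, $N=10$ belongs to the oscillatory case: for $N\leq 10$ one has $p-1>(\alpha/2)^2$ for large $p$, so $G_{10}$ is the sine kernel and must be treated by the pairing argument, not the positive-kernel estimate. Finally, the $N<10$ pairing --- which is where most of the work lies --- is only sketched; to close it you need the one-block isolation (the paper retains only $\int_{\zeta}^{\zeta+\pi/\beta}G_N\phi(\eta)$, using that the paired remainder is nonnegative by \eqref{acnd}), together with a crude full-line estimate of the lower-order $m^2 e^{-2m\sigma}\eta$ term, which only contributes $O(p^{-1/2})f(\zeta)$ and thus disappears into the $(1+\varepsilon_0/2)$ slack.
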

\begin{proof}
In Lemma \ref{lemetaneg}, we proved that $\tilde{\eta} \leq 0$ on $(\zeta_1^*, \infty)$. In order to obtain an estimate on $|\tilde{\eta}|$, we need a lower bound on $\tilde{\eta}$. 

First, let us assume that $N\leq 10$. Since $ G_N(\sigma - \zeta) \leq 0$  on the interval $\left(\zeta+\frac{(2k+1)\pi}{\beta}, 
\zeta+\frac{(2k+2)\pi}{\beta}\right)$,   
\eqref{ginq} and \eqref{frl} yield on such interval
$$
F(\zeta, \sigma) + F\left(\zeta, \sigma + \frac{\pi}{\beta}\right)  \geq 0.
$$ 
Consequently, by using that $\phi$ is decreasing and $\tilde{\eta} \leq 0$, we obtain, for any $\zeta \geq \zeta_1^*$,
\begin{align}
\label{lemestetae1}
\tilde{\eta}(\zeta) &\geq
 \int_{\zeta}^{\zeta + \frac{\pi}{\beta}} G_N(\sigma - \zeta)\phi((f + \tilde{\eta})(\sigma)) d\sigma 
 +  m^2 \int_{\zeta}^{\infty} G_N(\sigma - \zeta) e^{-2m\sigma} (\tilde{\eta} +f) (\sigma ) d\sigma \nonumber\\
&\geq  \int_{\zeta}^{\zeta + \frac{\pi}{\beta}} G_N(\sigma - \zeta) \phi(f(\sigma)) d\sigma  - m^2 (1+P_{N, \varepsilon_0})  \int_{\zeta}^{\infty} |G_N(\sigma - \zeta)| e^{-2m\sigma} f (\sigma ) d\sigma .
\end{align}
Using the explicit forms of $G_N$ and $f$, a direct computation allows us to estimate the second term
\begin{equation}
m^2 (1+P_{N, \varepsilon_0})  \int_{\zeta}^{\infty} |G_N(\sigma - \zeta)| e^{-2m\sigma} f (\sigma ) d\sigma 
\leq (1+P_{N,\varepsilon_0})  m^2  \frac{D_p}{(4m+\alpha /2) \beta} e^{-4m \zeta}.
\end{equation}
In order to estimate the first term on the right hand side of \eqref{lemestetae1}, we use that $x\mapsto \phi(x)/x^2$ is decreasing, and therefore for any $y \geq x > 0$,  
\begin{equation}
\frac{\phi(x)}{x^2} \geq \frac{\phi(y)}{y^2} \,,
\end{equation}
 which implies
\begin{equation}
\phi(f(\sigma)) \geq \phi(f(\zeta)) \left( \frac{f(\sigma)}{f(\zeta)} \right)^2 = \phi(f(\zeta)) e^{-4m(\sigma - \zeta)} .
\end{equation}

Thus, inserting the two previous estimates into \eqref{lemestetae1}, we obtain for any $\zeta \geq \zeta_1^*$
\begin{align}
\tilde{\eta}(\zeta)
&\geq \dfrac{\phi(f(\zeta))}{\beta} \int_{\zeta}^{\zeta + \frac{\pi}{\beta}} e^{-(\frac{\alpha}{2} +4m)(\sigma - \zeta)} \sin(\beta(\sigma - \zeta)) d\sigma   - (1+P_{N,\varepsilon_0}) m^2  \frac{D_p}{(4m+\alpha /2) \beta} e^{-4m \zeta}
\\
&= \dfrac{4\phi(f(\zeta))}{(\alpha + 8m)^2 + 4\beta^2} (1 + e^{-\frac{(\alpha + 8m) \pi}{2\beta}}) - (1+P_{N,\varepsilon_0}) m^2  \frac{D_p}{(4m+\alpha /2) \beta} e^{-4m \zeta}
  \,.
\end{align}
Since $\zeta_1^* \geq \tilde{\zeta}_p$, we have $f(\zeta) \leq C/p$, for any $\zeta \geq \zeta_1^*$. 
Thus,  there exists a constant $C_N>0$, not depending on $p$, such that
$$ 
(1+P_{N,\varepsilon_0}) m^2  \frac{D_p}{(4m+\alpha /2) \beta} e^{-4m \zeta} \leq C_N  p^{1/2} \beta^{-1}   f^2 (\zeta)\leq C_N  p^{-1/2}   f(\zeta) .
$$
Using again that  $\tilde{\eta} \leq 0$ and  $x \mapsto \phi(x)/x$ is decreasing and the definition of $P_N$ (see \eqref{defPN}), we obtain, 
for any $\zeta \geq \tilde{\zeta}_p$ and sufficiently large $p$,
\begin{equation}\label{ren}
|\tilde{\eta}(\zeta)| \leq 
  \dfrac{4|\phi(f(\zeta))|}{((\alpha + 8)^2 + 4\beta^2)f(\zeta)} (1 + e^{-\frac{(\alpha +8)\pi}{2\beta}}) f(\zeta) +  \frac{C}{p^{\frac{1}{2}}} f(\zeta)\leq
\left( P_N +  \frac{C}{p^{\frac{1}{2}}} \right) f(\zeta) < \left(1 + \frac{\varepsilon_0}{2}\right) P_N f(\zeta) \,.
\end{equation}
If $\zeta_1^* > \tilde{\zeta}_p$ , then, by continuity and \eqref{ren}, $|\tilde{\eta}(\zeta)| \leq (1 + \varepsilon_0)P_N |f(\zeta)| $  holds for any $\tilde{\zeta}_p \leq \zeta \leq \zeta_1^*$ sufficiently close to $\zeta_1^*$, a contradiction to the definition of $\zeta_1^*$. Thus 
$\zeta_1^* = \tilde{\zeta}_p$ as desired. 

If $N > 10$,  
using $G_N \geq 0$, the monotonicity of $\phi$, and $\tilde{\eta} \leq 0$ as above, we obtain, for any $\zeta \geq \zeta_1^*$,
\begin{align*}
\tilde{\eta} (\zeta )&\geq  \int^\infty_{\zeta} G_N(\sigma - \zeta)(\phi (f(\zeta)) e^{-4m (\sigma -\zeta)} - m^2 (1+P_{N,\varepsilon_0}) e^{-2m\sigma} f(\sigma ))   d\sigma 
\\
&\geq \phi (f(\zeta)) \int^\infty_{\zeta} G_N(\sigma - \zeta)e^{-4m (\sigma - \zeta)} d\sigma - \frac{1}{2\beta |\frac{\alpha}{2} -\beta +4m|} f^2(\zeta) \,,
\end{align*}
Then, one has 
\begin{equation}
\tilde{\eta} (\zeta) \geq \dfrac{1}{2\beta (\frac{\alpha}{2}+4m -\beta )}  \phi (f(\zeta))    -   O(p^{-1}) f^2(\zeta ) \, .
\end{equation}
Proceeding as above, we find
$$|\tilde{\eta} (\zeta )|\leq  \dfrac{ \phi (f(\zeta)) }{2\beta (\frac{\alpha}{2}+4m -\beta ) f(\zeta )} f(\zeta )+ \frac{C}{p} f^2(\zeta ) 
< 
\left(1+\frac{ \varepsilon_0}{2}\right) P_N f(\zeta ).   $$
And the proof is concluded as in the previous case.
\end{proof}

\begin{rmq} \label{pbou}
In the  Lemma \ref{tildeetapetit}, we proved that 
\begin{equation}
0 \geq \tilde{\eta}(\zeta) \geq  -(1+\varepsilon_0) P_N f(\zeta) \qquad \textrm{for any } \zeta \geq \tilde{\zeta}_p
\end{equation}
which combined with $P_N \leq \frac{1}{2}$ imply 
\begin{equation}
0 \leq \eta \leq f(\zeta) \qquad \textrm{for any } \zeta \geq  \tilde{\zeta}_p \,.
\end{equation}
In the original variables, we have
\begin{equation}
A_{p, N} r^{-\theta} \leq   U^\ast_p (r) \leq A_{p, N} r^{-\theta} (1+ D_p r^2)
\qquad \textrm{for any } r \leq \tilde{c}/\sqrt{p} \,.  
\end{equation}
The importance of this bound is in the estimate on $U^*_p$ on an explicit interval.  
\end{rmq}

\begin{prop}
\label{LNTpinfty}
For any fixed $i\in \N$, we have
$$R_p^i \rightarrow 0, \ as\ p\rightarrow \infty .$$
\end{prop}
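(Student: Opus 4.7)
My plan is to show that, for every fixed $\varepsilon > 0$ and every $i \in \N$, the singular solution $U^\ast_p$ admits at least $i$ critical points in $(0,\varepsilon)$ for all sufficiently large $p$; this forces $R^i_p \leq \varepsilon$ and, as $\varepsilon$ was arbitrary, gives $R^i_p \to 0$. Setting $r_0 := \tilde c/\sqrt{p}$, a direct application of Remark \ref{pbou} combined with the asymptotics $A_{p,N} \to 1$, $r_0^{-\theta} = \exp(-\theta \ln r_0) = 1 + O((\ln p)/p)$, and $D_p r_0^2 = O(1/p)$ yields $|U^\ast_p(r_0) - 1| = O((\ln p)/p)$. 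To handle the derivative I differentiate the representation formula \eqref{repform} at $\zeta = \tilde\zeta_p$, run the same estimates as in Lemma \ref{tildeetapetit} with $G_N'$ in place of $G_N$, and then use the chain rule through $\zeta = -m^{-1}\ln r$ to deduce $(U^\ast_p)'(r_0) = O(p^{-1/2})$; in particular $(U^\ast_p)'(r_0) \to 0$.

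Next, I exploit the Hamiltonian-type energy
\[
H(r) := \tfrac{1}{2}\bigl((U^\ast_p)'(r)\bigr)^2 + F(U^\ast_p(r)), \qquad F(u) := -\tfrac{1}{2}u^2 + \tfrac{1}{p+1}u^{p+1},
\]
which decays along trajectories since $H'(r) = -\tfrac{N-1}{r}\bigl((U^\ast_p)'(r)\bigr)^2 \leq 0$, so $H(r) \leq H(r_0)$ for every $r \geq r_0$. Step one yields $H(r_0) = F(1) + O((\ln p)^2/p)$. Because $F$ has a strict, unique minimum at $u = 1$ on $(0,\infty)$ and satisfies a uniform coercivity bound of the form $F(u) - F(1) \geq \tfrac{1}{2}(u - 1)^2$ on $[\tfrac{1}{2}, \tfrac{3}{2}]$ for every sufficiently large $p$, a continuity/bootstrap argument prevents $U^\ast_p$ from leaving $[\tfrac{1}{2}, \tfrac{3}{2}]$ and yields the uniform estimate $|U^\ast_p(r) - 1| = O((\ln p)/\sqrt{p})$ for every $r \geq r_0$ and every sufficiently large $p$.

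Finally, I apply the Sturm--Picone comparison theorem to equation \eqref{eqw} satisfied by $w(r) := r^{(N-1)/2}(U^\ast_p(r) - 1)$. The algebraic identity $(u^p - u)/(u - 1) = u + u^2 + \cdots + u^{p-1}$ combined with the bound $|U^\ast_p - 1| = O((\ln p)/\sqrt{p})$ shows that the coefficient of $w$ in \eqref{eqw} is at least $c\sqrt{p}/\ln p$ on any interval of the form $[C_N(\ln p)/p^{1/4}, \varepsilon]$, where $C_N$ is chosen so that the singular term $(N-1)(N-3)/(4r^2)$ is dominated. Comparison with the constant-coefficient equation $\psi'' + (c\sqrt{p}/\ln p)\,\psi = 0$ produces consecutive zeros of $w$ separated by at most $\pi\sqrt{\ln p/(c\sqrt{p})} \to 0$, so $w$ --- equivalently $U^\ast_p - 1$ --- has arbitrarily many zeros in $(0,\varepsilon)$ as $p \to \infty$. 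Rolle's theorem applied between consecutive zeros of $U^\ast_p - 1$ furnishes a critical point of $U^\ast_p$ in each gap; for $p$ sufficiently large this yields at least $i$ critical points in $(0,\varepsilon)$, as desired. The main obstacle is step two --- the bootstrap must simultaneously confine $U^\ast_p$ to $[\tfrac{1}{2}, \tfrac{3}{2}]$ and extract a quantitatively small deviation from $1$ --- together with the sharp derivative bound at $r_0$ in step one, which requires a differentiated variant of the estimates underlying Lemma \ref{tildeetapetit}.
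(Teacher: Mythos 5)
Your proof is correct and follows the same three-step structure as the paper's: control of $U^*_p$ and $(U^*_p)'$ at $\tilde r_p = \tilde c/\sqrt p$ via Remark \ref{pbou} and the differentiated representation formula, propagation of $|U^*_p - 1| \to 0$ on $[\tilde r_p,\infty)$ via the decreasing energy $E(r)$, and Sturm--Picone comparison plus Rolle on the equation \eqref{eqw} for $w = r^{(N-1)/2}(U^*_p - 1)$. The only differences are bookkeeping: your coercivity bound $F(u) - F(1)\geq \tfrac12(u-1)^2$ yields $|U^*_p - 1| = O((\ln p)/\sqrt p)$ rather than the sharper $O(1/p)$ the paper implicitly uses, but the resulting coefficient lower bound $\gtrsim \sqrt p/\ln p$ on $[C_N(\ln p)/p^{1/4},\varepsilon]$ still diverges, so the comparison argument closes just the same.
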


\begin{proof}
Assume $\tilde{c}$ is as in Lemma \ref{l:boct} and denote $\tilde{r}_p = \tilde{c}/\sqrt{p}$ and $\tilde{\zeta}_p = - m^{-1} \ln \tilde{r}_p$. 
Then, Remark \ref{pbou} holds on the interval $(0, \tilde{r}_p]$.
As above, we denote by $C_N$ constants depending on $N$ but not on $p$. 

First assume $N\leq 10$. 
Observe that  \eqref{phes}, implies 
$|\phi (z)| \leq C_N p^2 z^2$ for $z \leq c_N/p$ and consequently Lemma \ref{tildeetapetit} yields  $|\tilde{g}(\zeta)| \leq C_n p^2 e^{-4m\zeta}$ for any $\zeta \geq \tilde{\zeta}_p$.
Then, taking the derivative of the representation formula \eqref{repform}, using that $\tilde{\eta} \leq 0$, asymptotics \eqref{LNTlem1asy1}, and the definition 
of $\tilde{\zeta}_p$ (see \eqref{relrzeta}), we have
\begin{equation}\label{ovn}
\begin{aligned}
\tilde{\eta}^\prime (\tilde{\zeta}_p  )&= \frac{\alpha}{2}\tilde{\eta}(\tilde{\zeta}_p) - e^{(\alpha /2) \tilde{\zeta}_p} \int_{\tilde{\zeta}_p}^\infty e^{-(\alpha /2 )\sigma } \cos (\beta (\sigma -\tilde{\zeta}_p)) \tilde{g}(\sigma ) d\sigma \\
&\leq C_N p^2 e^{(\alpha /2) \tilde{\zeta}_p} \int_{\tilde{\zeta}_p}^\infty e^{-(\alpha /2)\sigma} e^{-4m\sigma}d\sigma  \\
&\leq C_N  p^{3/2} e^{-4m\tilde{\zeta}_p}  \leq \frac{C_N}{\sqrt{p}}.
\end{aligned}
\end{equation}
The same estimate holds true for $N>10$ since 
\begin{align*}
 \int_{\tilde{\zeta}_p}^\infty e^{-(\alpha /2)\sigma} \cosh (\beta (\sigma -\tilde{\zeta}_p) e^{-4m\sigma}d\sigma 
 &\leq  \int_{\tilde{\zeta}_p}^\infty e^{-(\alpha /2)\sigma} e^{\beta (\sigma -\tilde{\zeta}_p)} e^{-4m\sigma}d\sigma \\
&\leq C_N p^{-1/2} e^{-(\alpha /2) \tilde{\zeta}_p }e^{-4m \tilde{\zeta}_p}.
\end{align*}
Thus, we have, using that $U_p^\ast (r)= A_{p,N} r^{-\theta} (\tilde{\eta}(\zeta ) +1+D_p e^{-2m\zeta}) $, \eqref{LNTlem1asy1}, Lemma \ref{tildeetapetit}, \eqref{ovn}, and the 
definition of $\tilde{r}_p$
\begin{equation} \label{desu}
\begin{aligned}
|(U_p^\ast )^\prime (\tilde{r}_p)|&= A_{p,N} \tilde{r}_p^{-\theta} \left| (-\theta) \left( \frac{\tilde{\eta} (\tilde{\zeta}_p) +1}{\tilde{r}_p} +D_p \tilde{r}_p\right) - \frac{\tilde{\eta}^\prime (\tilde{\zeta}_p)}{m \tilde{r}_p} +2D_p \tilde{r}_p \right|\\
&\leq C_N (\tilde{r}_p +   (p \tilde{r}_p)^{-1} (1+|\tilde{\eta} (\tilde{\zeta}_p)|) + |\tilde{\eta}^\prime (\tilde{\zeta}_p)| \tilde{r}_p^{-1}p^{-1/2} )  \\
&\leq C_N p^{-1/2} \rightarrow 0,\ as\ p\rightarrow \infty . 
\end{aligned}
\end{equation}
In addition,  Remark \ref{pbou} implies
\begin{equation}
A_{p, N} \left(\frac{p}{\tilde{c}^2}\right)^{\frac{1}{p-1}} \leq  U^*_p(\tilde{r}_p) \leq A_{p, N} \left(\frac{p}{\tilde{c}^2}\right)^{\frac{1}{p-1}} \left(1 + D_p\frac{\tilde{c}^2}{p}\right) \,,
\end{equation}
and consequently $U^*_p(\tilde{r}_p) \to 1$ as $p \to \infty$. Also, we have
\begin{equation}
A_{p, N}^{p + 1} \left(\frac{p}{\tilde{c}^2}\right)^{\frac{p+1}{p-1}} \leq  (U^*_p(\tilde{r}_p))^{p + 1} \leq A_{p, N}^{p + 1} \left(\frac{p}{\tilde{c}^2}\right)^{\frac{p + 1}{p-1}} 
\left(1 + D_p\frac{\tilde{c}^2}{p}\right)^{p + 1}.
\end{equation}
Since $A_{p, N}^{p + 1} \approx (p - 1)^{-\frac{p+ 1}{p -1}}$, we obtain that $(U^*_p(\tilde{r}_p))^{p + 1} \approx 1$, and therefore $(U^*_p(\tilde{r}_p))^{p + 1}/(p + 1) \approx p^{-1}$ as 
$p \to \infty$.

Next, we will prove more precise estimate.
Since the function 
\begin{equation}
r \mapsto E(r) = \frac{((U_p^\ast)^\prime (r))^2}{2} - \frac{1}{2} (U_p^\ast)^2(r) + \frac{(U_p^\ast )^{p+1} (r)}{p+1}
\end{equation}
is non-increasing, then by the above estimates,  one has, for any $r \geq \tilde{r}_p$,
\begin{equation}
((U_p^\ast)^\prime  (\tilde{r}_p ) )^2- \frac{1}{2} + \mu_p \geq  E(\tilde{r}_p) 
\geq E(r) \geq - \frac{(U_p^\ast)^2(r)}{2}  \,,
\end{equation}
where $\mu_p \to 0$ as $p \to \infty$. 
Hence, from \eqref{desu} follows
\begin{equation}
1 - (U_p^\ast)^2(r) \leq 2 \mu_p ,
\end{equation}
and therefore $(1 - (U_p^\ast)^2(r))_{+} \to 0$ as $p \to \infty$, where $h_+ = \max \{h, 0\}$ denotes the positive part of a function $h$.
On the other hand, if there is $\varepsilon^* > 0$ and $r^*_p > \tilde{r}_p$ such that $U^*_p(r^*_p) \geq 1 + \varepsilon^*$, then 
\begin{equation}
E(r^*_p) \geq - \frac{1}{2} (U_p^\ast)^2(r) + \frac{(U_p^\ast )^{p+1} (r)}{p+1}  \to \infty \qquad \textrm{as } p \to \infty \,,
\end{equation}
a contradiction to $E(r^*_p) \leq E(\tilde{r}_p) \leq C_N$. 

Overall  we proved that $|U_p^\ast (r)-1| \rightarrow 0$, for all $r\geq \tilde{r}_p$. Recall that $w(r)=r^{\frac{N-1}{2}}(U_p^\ast (r) - 1)$ (see \eqref{eqw}) satisfies 
$$
w^{\prime \prime} + \left(\frac{(U_p^\ast )^p - U_p^\ast}{U_p^\ast-1} - \frac{(N-1)(N-3)}{r^2} \right) w=0.
$$
Fix  $a > 0$ and denote $I_a := \left[\frac{a}{4}, a\right]$.
Choose any $r \in I_a$. Since $|U^\ast_p (r)-1|\rightarrow 0$ locally uniformly, we have
$$
\dfrac{(U^\ast_p (r))^p - U^\ast_p (r)}{U^\ast_p (r)-1}\geq p/2 , \qquad \textrm{on } I_a.
$$
Fix a large $A > 0$ depending on $a$ as specified below.  Then, for sufficiently large $p > 2^\ast -1$ depending on $a$ and $A$, one has  
$$ 
\dfrac{(U^\ast_p (r))^p - U^\ast_p (r)}{U^\ast_p (r)-1} -\dfrac{(N-1)(N-3)}{4r^2}  \geq A - C_{N, a} \qquad \textrm{for any } \quad r \in I_a := \left[\frac{a}{4}, a\right].
$$
Thus, given $a > 0$ and an integer $i > 0$,  we choose $A$ large enough such that a solution of  the  equation $z'' + (A - C_{N, a} )z = 0$ has at least $i + 2$ zeros on $I_a$. By 
Sturm-Picone  oscillation theorem, the function $w$ has at least $i + 1$ zeros on $I_a$. 
Consequently, $U_p^* (r) = 1$ has at least $i + 1$ solutions on  $I_a$, and therefore $U_p^*$ has at least $i$
critical points on $I_a$. In a different notation, for any $j \in \{1, \cdots, i\}$ and any $a > 0$, one has $R^j_p < a$, for any sufficiently large $p > 2^\ast -1$.   
\end{proof}

\begin{rmq}\label{deub}
By \eqref{desu}, we have 
\begin{equation}
\left|(U^\ast_p)'(r) +\theta A_{p,N}r^{-1-\theta} \right|\leq  
\left| A_{p,N} r^{-\theta} \left( 
(-\theta) \left( \frac{\tilde{\eta} (\zeta)}{r} +D_p r \right) - \frac{\tilde{\eta}^\prime (\zeta)}{m r} +2D_p r 
\right)
 \right| \,.
\end{equation}
By Remark \ref{pbou} and \eqref{ovn}, one has 
$|\eta(\zeta)| \leq f(\zeta) \leq C_N r^2$ and $|\eta'(\zeta)| \leq C_N \sqrt{p} r^2 $
for any $r \leq \frac{\tilde{c}}{\sqrt{p}}$.
Thus, 
\begin{equation}
\left|(U^\ast_p)'(r) +\theta A_{p,N}r^{-1-\theta} \right|\leq C_N r^{1 - \theta},  \textrm{ for any } r \leq \frac{\tilde{c}}{\sqrt{p}} \,.
\end{equation}
\end{rmq}

\begin{prop}
\label{LNTcontmp}
For any $i\in \N$, the function $p\rightarrow R_p^i$ is continuous.
\end{prop}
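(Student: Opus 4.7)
The plan is to prove that $U^\ast_{p_n} \to U^\ast_{p_\infty}$ in $C^1_{\mathrm{loc}}(0, \infty)$ for any sequence $p_n \to p_\infty > 2^\ast - 1$, and then deduce $R_{p_n}^i \to R_{p_\infty}^i$ from the simplicity of the critical points of $U^\ast_{p_\infty}$.

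The first step is a uniform $C^1$ bound on $U^\ast_{p_n}$ on every compact subset of $(0, \infty)$. For this I would reprove Lemmas \ref{l:boct}--\ref{tildeetapetit} with $p$ ranging over a small closed interval $I$ around $p_\infty$ rather than tending to infinity. Since $\theta, A_{p, N}, \alpha, \beta, m, D_p$ depend continuously on $p$ and are bounded above and below on $I$, and since $G_N$ is uniformly integrable, the contraction argument underlying the representation formula \eqref{repform} yields a uniform analogue of Remark \ref{pbou}: there exists $r_\ast > 0$ depending only on $I$ such that
\[
A_{p_n, N} r^{-\theta_n} \leq U^\ast_{p_n}(r) \leq A_{p_n, N} r^{-\theta_n} (1 + D_{p_n} r^2) \quad \text{for all } r \in (0, r_\ast],
\]
and the derivative bound of Remark \ref{deub} holds uniformly in $n$ as well. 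Evaluated at a fixed $r_0 \in (0, r_\ast]$ this gives uniform control of $(U^\ast_{p_n}(r_0), (U^\ast_{p_n})'(r_0))$, and the monotonicity of the energy $E = (u')^2/2 - u^2/2 + u^{p+1}/(p+1)$ along trajectories propagates the bound to any compact subinterval of $(0, \infty)$.

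By Arzel\`a--Ascoli, a subsequence $U^\ast_{p_{n_k}} \to V$ in $C^1_{\mathrm{loc}}(0, \infty)$ where $V \geq 0$ solves the ODE from \eqref{LNTwhole} with exponent $p_\infty$; Cauchy--Lipschitz uniqueness at an interior point combined with the singular behavior at $0$ forces $V > 0$ on $(0, \infty)$, and passing to the limit in the two-sided estimate yields $r^{\theta_\infty} V(r) \to A_{p_\infty, N}$ as $r \to 0^+$. The uniqueness statement of Theorem \ref{LNTthmmiya} identifies $V = U^\ast_{p_\infty}$; since every convergent subsequence has the same limit, the whole sequence converges.

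At any critical point $R = R_{p_\infty}^i$ the equation reduces to $(U^\ast_{p_\infty})''(R) = U^\ast_{p_\infty}(R) - (U^\ast_{p_\infty}(R))^{p_\infty}$; if this vanished, we would have $U^\ast_{p_\infty}(R) = 1$ together with $(U^\ast_{p_\infty})'(R) = 0$, and Cauchy--Lipschitz uniqueness at the regular point $R > 0$ would give $U^\ast_{p_\infty} \equiv 1$, contradicting Theorem \ref{LNTthmmiya}. Hence $R_{p_\infty}^i$ is a simple zero of $(U^\ast_{p_\infty})'$, and the implicit function theorem combined with the $C^1_{\mathrm{loc}}$ convergence yields $R_{p_n}^i \to R_{p_\infty}^i$; Remark \ref{deub} ensures that no critical points accumulate near $0$, so the indexing is preserved. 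I expect the main obstacle to be the first step, namely verifying that the contraction argument of Lemma \ref{tildeetapetit} closes with constants uniform in $p$ over a compact neighborhood of $p_\infty$, rather than only in the regime $p \to \infty$ where the paper carries it out.
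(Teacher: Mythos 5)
Your proposal follows essentially the same route as the paper: uniform local estimates near the origin propagated by energy monotonicity, Arzel\`a--Ascoli compactness, identification of the limit via the uniqueness statement of Theorem~\ref{LNTthmmiya}, and non-degeneracy of critical points (using that $U^\ast_{p_\infty}(R^i_{p_\infty})\ne 1$) to get convergence of the $R^i_p$. The paper's only real differences of detail are that it obtains local uniqueness of the nearby critical point by a mean-value-theorem contradiction rather than invoking the implicit function theorem, and that it appeals to Remark~\ref{pbou} directly instead of re-verifying the near-origin lemmas on a compact $p$-interval as you propose --- your version is arguably the more careful one when $p^\ast$ is not large, since Remark~\ref{pbou} as stated is derived for $p$ sufficiently large.
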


\begin{proof}
Let $p^\ast > 2^\ast -1$. 
 Fix any open interval $I_0 = (A, B)$ such that $0 < A < B < \infty$ and without loss of generality assume that $A < \tilde{c}/(2p)$. Then, by Remark \ref{pbou}, there is $\delta > 0$ such that, for any 
$p \in (p^\ast - \delta, p^* + \delta)$, one has
\begin{equation}
|U^*_p (A)| \leq C_N \,.
\end{equation}
If $r \leq R_p^1$, since $U_p^\ast$ is decreasing and positive (see \cite[Theorem A.3]{miya1}) 
on $(0,R_p^1)$, we have $|U^*_p (r)| \leq C_N$, for any $r\in (A,B)$. If $r >R_p^1$, we use the fact that the functional 
$$E(r)=\frac{((U_p^\ast)^\prime (r))^2}{2} -\frac{(U_p^\ast (r))^2}{2} +\frac{(U_p^\ast (r))^{p+1}}{p+1},$$
is decreasing. 
Since $U_p^\ast (R^1_p) \leq 1$ by \cite[Lemma 4.8]{miya1}, 
this implies for any $r\geq R_p^1$ that 
$$\frac{(U_p^\ast (r))^{p+1}}{p+1} -\frac{(U_p^\ast (r))^2}{2}  \leq  \frac{(U_p^\ast (R_p^1))^{p+1}}{p+1}\leq \frac{1}{p+1}.  $$
Thus, also in this case, we have $|U^*_p| \leq C_N$ on $(A, B)$. Overall, we showed that 
\begin{equation*}
\sup_{p \in (p^* - \delta, p^* + \delta)} \sup_{(A, B)} U_p^\ast \leq C(A) \,.
\end{equation*}
Then, elliptic regularity theory implies  that, for any $q > 1$,
\begin{equation}
\label{LNTconte1}
\|U^\ast_p \|_{W^{3, q}(I_0)} \leq C(N, q, A, B - A,  p^\ast, \delta) ,\qquad \textrm{for any } p \in (p^* - \delta, p^* + \delta) \,.
\end{equation}

Let $\alpha_0 \in (0, 1)$. We choose $q_0>0$ large enough such that $W^{3, q_0} (I_0) \hookrightarrow C^{2 + \alpha_0}(I_0)$.  Let $(p_n)$ be a sequence such that $p_n \to p^*$ when $n\rightarrow \infty$. Thanks to \eqref{LNTconte1}, using Arzela-Ascoli Theorem, there exists a subsequence $(p_n)$ such that $U^\ast_{p_n} \to w$, as $n\to \infty$, in $C^{2}(I_0)$. From the uniform bound \eqref{LNTconte1} follows
\begin{equation*}
|(U^\ast_{p_n})^{p_n} (s) - w^{p^*}(s)| \leq |(U^\ast_{p_n})^{p_n} (s) - (U^\ast_{p_n})^{p^\ast} (s)| + |(U^\ast_{p_n})^{p^\ast} (s)  - w^{p^*}(s)|,
\end{equation*}
we deduce that $w$ satisfies the equation
\begin{equation*}
- \Delta w + w = w^{p^*}, \qquad \textrm{in } I_0 \,.
\end{equation*}
Since $I_0$ is an arbitrary compact interval, proceeding as above and using standard diagonal arguments, 
we obtain the existence of a subsequence $(p_n)_n$, $p_n \in (p^\ast -\delta , p^\ast +\delta )$, for all $n\in \N$, such that $U^\ast_{p_n} \to w$, as $n\to \infty$, in $C^{2}_{loc}((0,\infty ))$, for some function $w$ satisfying
\begin{equation*}
- \Delta w + w = w^{p^*} \qquad \textrm{in } (0, \infty) \,.
\end{equation*} 
Next, we claim that $w$ is in fact equal to $U_{p^\ast}^\ast$. Using the uniqueness of solution to \eqref{LNTwhole} (see Theorem \ref{LNTthmmiya}), it is sufficient to show that 
\begin{equation}
\label{LNTconte2}
\lim_{r\rightarrow 0^+} r^{\theta_{p^\ast} } w(r)=A_{p^\ast, N },
\end{equation}
where $\theta_{p^\ast}= \dfrac{2}{p-1}$. However if $p > 2$, by Remark \ref{pbou} for any $\varepsilon > 0$, there is $r_0(\varepsilon)$ independent of
$ p \in (p^\ast - \delta, p^* + \delta)$ such that 
\begin{equation*}
A_{p, N} r^{-\theta_p} \leq U_p^\ast (r) \leq   A_{p, N} r^{-\theta_p} + \varepsilon, \qquad \textrm{for all }\quad r \in (0, r_0(\varepsilon)) \,.
\end{equation*} 
Clearly $A_{p_n, N}\rightarrow A_{p^\ast, N}$, $\theta_{p_n}\rightarrow \theta_{p^\ast}$, when $n\to \infty$ and
using that $U_{p_n}^\ast \to w$ in 
$C^2_{\textrm{loc}} ((0, r_0(\varepsilon))) $, we obtain
\begin{equation*}
A_{p^*, N} r^{-\theta_{p^\ast}} \leq w(r) \leq  A_{p^*, N} r^{-\theta_{p^\ast}} + \varepsilon , \qquad \textrm{for all }\quad r \in (0, r_0(\varepsilon)) \,.
\end{equation*} 
Since $\varepsilon > 0$ is arbitrary, we conclude that \eqref{LNTconte2} holds, and therefore $w=U_{p^\ast}^\ast$
by uniqueness. Hence, 
\begin{equation}
\label{KSconte3}
U_p^\ast \rightarrow U_{p^\ast}^\ast ,\textrm{ as }\ p\to p^\ast,\ \textrm{ in }\ C_{loc}^2((0,\infty)).
\end{equation}
Finally, we prove the continuity of the function $p\rightarrow R_p^i$. In the following, we assume that $R_{p^*}^i$ is a local minimum of $U_{p^*}^\ast$, 
the case of local maximum follows analogously. Note that $U_{p^*}^*(R_{p^*}^i) \neq 1$, 
otherwise $U^* \equiv 1$, and 
we have a contradiction to the uniqueness of the initial value problem. Thus, for any sufficiently small $\bar{\varepsilon} > 0$, we obtain
\begin{equation*}
U^\ast_{p^\ast}(R_{p^*}^i - \bar{\varepsilon}) > U^\ast_{p^\ast}(R_{p^*}^i) \qquad \textrm{and } \quad U^\ast_{p^\ast}(R_{p^*}^i + \bar{\varepsilon}) > U^\ast_{p^\ast}(R_{p^*}^i) \,.
\end{equation*}
Then \eqref{KSconte3} yields that, for $p$ sufficiently close to $p^\ast$, there exists a local minimizer $q_p$ of $U^\ast_{p}$ 
in $(R_{p^*}^i - \bar{\varepsilon}, R_{p^*}^i + \bar{\varepsilon})$. Since $\bar{\varepsilon} > 0$ was arbitrary,
for each $p$ close to $p^*$, there is a local minimizer $q_p$ of $(U^\ast_{p})'(q_p) = 0$
such that
\begin{equation}
\lim_{p \to p^\ast} q_p =  R_{p^*}^i \,.
\end{equation}
On the other hand, if there exists a sequence $(p_n)_{n \in \N}$ such that $p_n \to p^*$ and $(q_{p_n})_{n \in \N}$
converges to $R^*$, then by \eqref{KSconte3}, one has $(U^*_{p^*})' (R^*)  = 0$. Equivalently $R^* = R^j_{p^*}$ for some $j$. Thus, 
we proved that the critical points of $U^*_p$ concentrate around critical points of   $U^*_{p^*}$ and in arbitrary small neighborhood of $R_{p^*}^i$ there is a 
critical point of $U^*_p$.   

To finish the proof, we show that in a small neighborhood of $R^j_{p^*}$, there exists at most one critical point of $U^*_p$.
For a contradiction, assume that there exists a sequence $(p_n)_{n \in \N}$ such that $p_n \to p^*$ and both sequences $(q_{p_n})_{n \in \N}$, 
$(q_{p_n}')_{n \in \N}$ converge to $q^*$. Then by the mean value theorem, there exists $s_{p_n}$ between $q_{p_n}$ and 
$q_{p_n}'$ such that $(U_{p^\ast}^\ast)''(s_{p_n}) = 0$. By passing to the limit, one has $(U^*_{p^*})' (q^*)  = (U^*_{p^*})'' (q^*)  = 0$, a contradiction to the fact that every critical point is either strict minimizer or strict maximizer (otherwise by the uniqueness
of solutions to initial value problems, $U^*_{p^*}$ is constant). 

Overall, we proved that in each neighborhood of $R^i_{p^*}$, there exists exactly one critical point of 
$U^*_\lambda$ and the proof is finished.
\end{proof}

We are now in position to prove Theorem \ref{LNTmainthm}.

\begin{proof}[Proof of Theorem \ref{LNTmainthm}.]  By assumptions, 
we know that, for any $i\geq \tilde{i}$, $R_{\tilde{p}}^{i}>R$. On the other hand, by Proposition \ref{LNTpinfty},  for any $i\in \N$, $\lim_{p\rightarrow \infty}R_p^i < R$. Since the function $p\rightarrow R_i^p$, for any $i\in \N$, is continuous by Proposition \ref{LNTcontmp}, we deduce that there exists $p^i > \tilde{p}$ such that $R_{p^i}^i =R$. This concludes the proof.
\end{proof}

\section{Proof of Proposition \ref{LNTmorse}}

First, we show that $U_{p^i}^\ast$ has a finite (resp. infinite) Morse index provided that $p^i >p_{JL}$ (resp. $2^\ast -1 <p^i <p_{JL}$), i.e. we prove Proposition \ref{LNTmorse}.

\begin{proof}[Proof of Proposition \ref{LNTmorse}.]
Fix $p := p^i > p_{JL}$. Then, for sufficiently small $\varepsilon_0 > 0$, one has 
\begin{equation}
p\theta (N-2-\theta) < (1 - \varepsilon_0) \frac{(N - 2)^2}{4} \,.
\end{equation}
Due to boundary conditions in \eqref{LNTwhole}, there exists $r_0\in (0,1)$ such that, for any $r\in (0,r_0)$,
$$
p (U_p^\ast)^{p-1} (r)-1 \leq 
p (U_p^\ast)^{p-1} (r) \leq 
\dfrac{p\theta (N-2-\theta)}{r^2}(1+\varepsilon_0) \leq  \dfrac{(N-2)^2}{4 r^2} (1 - \varepsilon_0^2). 
$$
Let $\chi_0 \in C^1 (\R^N)$ be a cut-off function such that
 $\chi_0 (r)=
 \begin{cases}
 1, & \textrm{if } r\in (0, r_0 /2)\\ 
 0,& \textrm{if }  r> r_0 
 \end{cases}$, and let $\chi_1 =1-\chi_0$. We take $\phi \in H^1_{rad}(B_R (0))$ such that $\phi '(R)=0$. Then we have, thanks to the Hardy inequality,
\begin{align*}
\mathcal{J}(\phi )=\int_{B_R (0)} (|\nabla \phi |^2 - (p (U_p^\ast)^{p-1} (r)-1) \phi^2 dx &= \int_{B_R (0)} (|\nabla \phi |^2 -(\chi_0 +\chi_1) (p (U_p^\ast)^{p-1} (r)-1)  \phi^2 dx\\
&\geq  \int_{B_R (0)} ((1 - \varepsilon_0^2) |\nabla \phi |^2 -\chi_0 \dfrac{(N-2)^2}{4 r^2} (1 - \varepsilon_0^2)  \phi^2 dx \\
&+ \int_{B_R (0)} (\varepsilon_0^2 |\nabla \phi |^2 -\chi_1 (p (U_p^\ast)^{p-1} (r)-1)  \phi^2 dx\\
&\geq \int_{B_R (0)} (\varepsilon_0^2 |\nabla \phi |^2 -\chi_1 (p (U_p^\ast)^{p-1} (r)-1)  \phi^2 dx \,.
\end{align*}
Since $|p U_p^\ast )^{p-1}(r)-1|\leq C$, for $r\in (r_0 /2 , R)$, and the operator $-\varepsilon_0^2\Delta  -\chi_1 (p (U_p^\ast)^{p-1} (r)-1)  $ with Neumann boundary condition has a finite number of negative eigenvalues, we conclude that $m(U_p^\ast )<\infty$.

 Next assume that $2^\ast -1 < p< p_{JL}$. As above, using boundary condition in \eqref{LNTwhole}, one has that, for some small $\varepsilon_0>0$, there exists $r_0$ such that, for all $r\in (0,r_0)$,
\begin{equation}
\label{LNTup}
p (U_p^\ast)^{p-1} (r)-1 \geq  \left(\dfrac{(N-2)^2}{4 }+\varepsilon_0^2 \right) 
\frac{1}{r^2}. 
\end{equation}
Next, we define $f_j (r)=f(r) \tilde{\chi}_j (r)$, where 
$$
\tilde{\chi}_j (r)=
\begin{cases}1,& \text{ if $r\in [r_{j+1},r_j]$,}\\ 
0, & \text{ elsewhere } ,
\end{cases}
\qquad 
r_j=e^{-2\pi j/\varepsilon_0}
$$
and $f(r)=r^{-(N-2)/2} \sin (\varepsilon_0 \log r /2)$. Notice that $f_j$ and $f_k$ have disjoint supports for 
$j \neq k$, and therefore they are linearly independent. Moreover, $f_j$  is a solution of 
$$
- f_j'' - \frac{N - 1}{r} f_j' -  \left(\dfrac{(N-2)^2}{4}+\dfrac{\varepsilon_0^2}{4}\right) \dfrac{1}{r^2} f_j=0,\quad r\in (r_{j+1},r_j).
$$
Since $f_j(r_j) = f_j(r_{j + 1}) = 0$ we have that $f_j \in W^{1, 2}((0, \infty))$ and by \eqref{LNTup}
\begin{align}
\mathcal{J}(f_j) \geq 
\int_{r_{j+1}}^{r_j} \left(|f_j'|^2 -  \left(\dfrac{(N-2)^2}{4}+\varepsilon_0^2\right)\dfrac{1}{r^2} f_j^2\right)r^{N-1} dr = -\dfrac{3}{4}\varepsilon_0^2 \int_{r_{j+1}}^{r_j} \dfrac{1}{r^2} f_j^2 dx <0 \, .
\end{align}
Thus the Morse index of $U^*_{p}$ is infinite.

\end{proof}

\begin{bibdiv}
\begin{biblist}

\bib{MR3801293}{article}{
      author={Ao, Weiwei},
      author={Chan, Hardy},
      author={Wei, Juncheng},
       title={Boundary concentrations on segments for the {L}in-{N}i-{T}akagi
  problem},
        date={2018},
        ISSN={0391-173X},
     journal={Ann. Sc. Norm. Super. Pisa Cl. Sci. (5)},
      volume={18},
      number={2},
       pages={653\ndash 696},
      review={\MR{3801293}},
}

\bib{MR2812575}{article}{
      author={Ao, Weiwei},
      author={Musso, Monica},
      author={Wei, Juncheng},
       title={On spikes concentrating on line-segments to a semilinear
  {N}eumann problem},
        date={2011},
        ISSN={0022-0396},
     journal={J. Differential Equations},
      volume={251},
      number={4-5},
       pages={881\ndash 901},
         url={https://doi.org/10.1016/j.jde.2011.05.009},
      review={\MR{2812575}},
}

\bib{BCF}{article}{
      author={Bonheure, Denis},
      author={Casteras, Jean-Baptiste},
      author={F\"{o}ldes, Juraj},
       title={Singular radial solutions for the keller-segel equation in high
  dimension},
        date={2018},
     journal={Preprint arXiv:1808.06990},
}

\bib{BonheureGrossiNorisTerracini2015}{article}{
      author={Bonheure, Denis},
      author={Grossi, Massimo},
      author={Noris, Benedetta},
      author={Terracini, Susanna},
       title={Multi-layer radial solutions for a supercritical neumann
  problem},
        date={2016},
        ISSN={0022-0396},
     journal={Journal of Differential Equations},
      volume={261},
      number={1},
       pages={455 \ndash  504},
  url={http://www.sciencedirect.com/science/article/pii/S0022039616001212},
}

\bib{BonheureGrumiauTroestler2015}{article}{
      author={Bonheure, Denis},
      author={Grumiau, Christopher},
      author={Troestler, Christophe},
       title={Multiple radial positive solutions of semilinear elliptic
  problems with {N}eumann boundary conditions},
        date={2016},
        ISSN={0362-546X},
     journal={Nonlinear Anal.},
      volume={147},
       pages={236\ndash 273},
         url={https://doi.org/10.1016/j.na.2016.09.010},
      review={\MR{3564729}},
}

\bib{buddnorbury}{article}{
      author={Budd, Chris},
      author={Norbury, John},
       title={Semilinear elliptic equations and supercritical growth},
        date={1987},
        ISSN={0022-0396},
     journal={J. Differential Equations},
      volume={68},
      number={2},
       pages={169\ndash 197},
         url={http://dx.doi.org/10.1016/0022-0396(87)90190-2},
      review={\MR{892022}},
}

\bib{dancershusen}{article}{
      author={Dancer, E.~N.},
      author={Yan, Shusen},
       title={Multipeak solutions for a singularly perturbed {N}eumann
  problem},
        date={1999},
        ISSN={0030-8730},
     journal={Pacific J. Math.},
      volume={189},
      number={2},
       pages={241\ndash 262},
         url={https://doi.org/10.2140/pjm.1999.189.241},
      review={\MR{1696122}},
}

\bib{MR3262455}{article}{
      author={del Pino, Manuel},
      author={Mahmoudi, Fethi},
      author={Musso, Monica},
       title={Bubbling on boundary submanifolds for the {L}in-{N}i-{T}akagi
  problem at higher critical exponents},
        date={2014},
        ISSN={1435-9855},
     journal={J. Eur. Math. Soc. (JEMS)},
      volume={16},
      number={8},
       pages={1687\ndash 1748},
         url={https://doi.org/10.4171/JEMS/473},
      review={\MR{3262455}},
}

\bib{MR2114411}{article}{
      author={del Pino, Manuel},
      author={Musso, Monica},
      author={Pistoia, Angela},
       title={Super-critical boundary bubbling in a semilinear {N}eumann
  problem},
        date={2005},
        ISSN={0294-1449},
     journal={Ann. Inst. H. Poincar\'{e} Anal. Non Lin\'{e}aire},
      volume={22},
      number={1},
       pages={45\ndash 82},
         url={https://doi.org/10.1016/j.anihpc.2004.05.001},
      review={\MR{2114411}},
}

\bib{dolbeaultflores}{article}{
      author={Dolbeault, Jean},
      author={Flores, Isabel},
       title={Geometry of phase space and solutions of semilinear elliptic
  equations in a ball},
        date={2007},
        ISSN={0002-9947},
     journal={Trans. Amer. Math. Soc.},
      volume={359},
      number={9},
       pages={4073\ndash 4087},
         url={http://dx.doi.org/10.1090/S0002-9947-07-04397-8},
      review={\MR{2309176}},
}

\bib{MR2779463}{book}{
      author={Dupaigne, Louis},
       title={Stable solutions of elliptic partial differential equations},
      series={Chapman \& Hall/CRC Monographs and Surveys in Pure and Applied
  Mathematics},
   publisher={Chapman \& Hall/CRC, Boca Raton, FL},
        date={2011},
      volume={143},
        ISBN={978-1-4200-6654-8},
         url={https://doi.org/10.1201/b10802},
      review={\MR{2779463}},
}

\bib{MR2334996}{article}{
      author={Esposito, Pierpaolo},
       title={Estimations \`a l'int\'{e}rieur pour un probl\`eme elliptique
  semi-lin\'{e}aire avec non-lin\'{e}arit\'{e} critique},
        date={2007},
        ISSN={0294-1449},
     journal={Ann. Inst. H. Poincar\'{e} Anal. Non Lin\'{e}aire},
      volume={24},
      number={4},
       pages={629\ndash 644},
         url={https://doi.org/10.1016/j.anihpc.2006.04.004},
      review={\MR{2334996}},
}

\bib{MR0153960}{article}{
      author={Gel'fand, I.~M.},
       title={Some problems in the theory of quasilinear equations},
        date={1963},
        ISSN={0065-9290},
     journal={Amer. Math. Soc. Transl. (2)},
      volume={29},
       pages={295\ndash 381},
      review={\MR{0153960}},
}

\bib{Gierer-Meinardt}{article}{
      author={Gierer, A.},
      author={Meinhardt, H.},
       title={A theory of biological pattern formation},
        date={197201},
     journal={Biological Cybernetics},
      volume={12},
       pages={30\ndash 39},
}

\bib{guowei}{article}{
      author={Guo, Zongming},
      author={Wei, Juncheng},
       title={Global solution branch and {M}orse index estimates of a
  semilinear elliptic equation with super-critical exponent},
        date={2011},
        ISSN={0002-9947},
     journal={Trans. Amer. Math. Soc.},
      volume={363},
      number={9},
       pages={4777\ndash 4799},
         url={http://dx.doi.org/10.1090/S0002-9947-2011-05292-X},
      review={\MR{2806691}},
}

\bib{josephlundgren}{article}{
      author={Joseph, D.~D.},
      author={Lundgren, T.~S.},
       title={Quasilinear {D}irichlet problems driven by positive sources},
        date={1972/73},
        ISSN={0003-9527},
     journal={Arch. Rational Mech. Anal.},
      volume={49},
       pages={241\ndash 269},
         url={http://dx.doi.org/10.1007/BF00250508},
      review={\MR{0340701}},
}

\bib{MR3606457}{article}{
      author={Lee, Youngae},
      author={Seok, Jinmyoung},
       title={Multiple interior and boundary peak solutions to singularly
  perturbed nonlinear {N}eumann problems under the {B}erestycki-{L}ions
  condition},
        date={2017},
        ISSN={0025-5831},
     journal={Math. Ann.},
      volume={367},
      number={1-2},
       pages={881\ndash 928},
         url={https://doi.org/10.1007/s00208-016-1412-3},
      review={\MR{3606457}},
}

\bib{LNT}{article}{
      author={Lin, C.-S.},
      author={Ni, W.-M.},
      author={Takagi, I.},
       title={Large amplitude stationary solutions to a chemotaxis system},
        date={1988},
        ISSN={0022-0396},
     journal={J. Differential Equations},
      volume={72},
      number={1},
       pages={1\ndash 27},
         url={https://doi.org/10.1016/0022-0396(88)90147-7},
      review={\MR{929196}},
}

\bib{MR974610}{incollection}{
      author={Lin, Chang~Shou},
      author={Ni, Wei-Ming},
       title={On the diffusion coefficient of a semilinear {N}eumann problem},
        date={1988},
   booktitle={Calculus of variations and partial differential equations
  ({T}rento, 1986)},
      series={Lecture Notes in Math.},
      volume={1340},
   publisher={Springer, Berlin},
       pages={160\ndash 174},
         url={http://dx.doi.org/10.1007/BFb0082894},
      review={\MR{974610}},
}

\bib{MR2395219}{article}{
      author={Malchiodi, Andrea},
       title={Concentrating solutions of some singularly perturbed elliptic
  equations},
        date={2008},
        ISSN={1673-3452},
     journal={Front. Math. China},
      volume={3},
      number={2},
       pages={239\ndash 252},
         url={https://doi.org/10.1007/s11464-008-0015-z},
      review={\MR{2395219}},
}

\bib{merlepeletier}{article}{
      author={Merle, F.},
      author={Peletier, L.~A.},
       title={Positive solutions of elliptic equations involving supercritical
  growth},
        date={1991},
        ISSN={0308-2105},
     journal={Proc. Roy. Soc. Edinburgh Sect. A},
      volume={118},
      number={1-2},
       pages={49\ndash 62},
         url={http://dx.doi.org/10.1017/S0308210500028882},
      review={\MR{1113842}},
}

\bib{miya2}{article}{
      author={Miyamoto, Yasuhito},
       title={Classification of bifurcation diagrams for elliptic equations
  with exponential growth in a ball},
        date={2015},
        ISSN={0373-3114},
     journal={Ann. Mat. Pura Appl. (4)},
      volume={194},
      number={4},
       pages={931\ndash 952},
         url={http://dx.doi.org/10.1007/s10231-014-0404-8},
      review={\MR{3357688}},
}

\bib{miya1}{article}{
      author={Miyamoto, Yasuhito},
       title={Structure of the positive radial solutions for the supercritical
  {N}eumann problem {$\varepsilon^2\Delta u-u+u^p=0$} in a ball},
        date={2015},
        ISSN={1340-5705},
     journal={J. Math. Sci. Univ. Tokyo},
      volume={22},
      number={3},
       pages={685\ndash 739},
      review={\MR{3408072}},
}

\bib{NT1}{article}{
      author={Ni, Wei-Ming},
      author={Takagi, Izumi},
       title={On the shape of least-energy solutions to a semilinear {N}eumann
  problem},
        date={1991},
        ISSN={0010-3640},
     journal={Comm. Pure Appl. Math.},
      volume={44},
      number={7},
       pages={819\ndash 851},
         url={https://doi.org/10.1002/cpa.3160440705},
      review={\MR{1115095}},
}

\bib{NT2}{article}{
      author={Ni, Wei-Ming},
      author={Takagi, Izumi},
       title={Locating the peaks of least-energy solutions to a semilinear
  {N}eumann problem},
        date={1993},
        ISSN={0012-7094},
     journal={Duke Math. J.},
      volume={70},
      number={2},
       pages={247\ndash 281},
         url={https://doi.org/10.1215/S0012-7094-93-07004-4},
      review={\MR{1219814}},
}

\bib{MR1707889}{article}{
      author={Rey, Olivier},
       title={An elliptic {N}eumann problem with critical nonlinearity in
  three-dimensional domains},
        date={1999},
        ISSN={0219-1997},
     journal={Commun. Contemp. Math.},
      volume={1},
      number={3},
       pages={405\ndash 449},
         url={https://doi.org/10.1142/S0219199799000158},
      review={\MR{1707889}},
}

\bib{MR1968337}{article}{
      author={Rey, Olivier},
       title={The question of interior blow-up-points for an elliptic {N}eumann
  problem: the critical case},
        date={2002},
        ISSN={0021-7824},
     journal={J. Math. Pures Appl. (9)},
      volume={81},
      number={7},
       pages={655\ndash 696},
         url={https://doi.org/10.1016/S0021-7824(01)01251-X},
      review={\MR{1968337}},
}

\bib{reywei}{article}{
      author={Rey, Olivier},
      author={Wei, Juncheng},
       title={Blowing up solutions for an elliptic {N}eumann problem with sub-
  or supercritical nonlinearity. {I}. {$N=3$}},
        date={2004},
        ISSN={0022-1236},
     journal={J. Funct. Anal.},
      volume={212},
      number={2},
       pages={472\ndash 499},
         url={http://dx.doi.org/10.1016/j.jfa.2003.06.006},
      review={\MR{2064935}},
}

\bib{MR2497911}{incollection}{
      author={Wei, J.},
       title={Existence and stability of spikes for the {G}ierer-{M}einhardt
  system},
        date={2008},
   booktitle={Handbook of differential equations: stationary partial
  differential equations. {V}ol. {V}},
      series={Handb. Differ. Equ.},
   publisher={Elsevier/North-Holland, Amsterdam},
       pages={487\ndash 585},
         url={https://doi.org/10.1016/S1874-5733(08)80013-7},
      review={\MR{2497911}},
}

\end{biblist}
\end{bibdiv}

\end{document}